\title{\textbf{First exit time from a bounded interval for pseudo-processes driven by the equation $\partial/\partial t=(-1)^{N-1} \partial^{2N}\!/\partial x^{2N}$}} \author{Aim\'e LACHAL\footnote{Affiliation: Universit\'e de Lyon/Institut Camille Jordan CNRS UMR5208 \hspace{.3\linewidth}\mbox{} Postal address: \textsl{Institut National des Sciences Appliqu\'ees de Lyon} \hspace{.4\linewidth}\mbox{} P\^ole de Math\'ematiques, B\^atiment L\'eonard de Vinci \hspace{.5\linewidth}\mbox{} 20 avenue Albert Einstein, 69621 Villeurbanne Cedex, France \hspace{.4\linewidth}\mbox{} \mbox{E-mail:~\texttt{aime.lachal@insa-lyon.fr}} \hspace{.6\linewidth}\mbox{} URL:~\texttt{http://maths.insa-lyon.fr/\~~\!\!\!lachal}}}\date{}\setmarginsrb{30mm}{15mm}{30mm}{15mm}{0mm}{10mm}{0mm}{10mm}\newtheorem{theorem}{Theorem}\newtheorem{corollary}{Corollary}\newtheorem{heuristic}{Heuristic}\newtheorem{remark}{Remark}\numberwithin{equation}{section}
\newcommand{\ind}{1\hspace{-.27em}\mbox{\rm l}}
\newcommand{\lqn}[1]{\noalign{\noindent $\displaystyle{#1}$}}
\begin{document}

\maketitle

\begin{abstract} Let $N$ be an integer greater than $1$. We consider the pseudo-process $X=(X_t)_{t \ge 0}$ driven by the high-order heat-type equation $\partial/\partial t=(-1)^{N-1} \partial^{2N}\!/\partial x^{2N}$. Let us introduce the first exit time $\tau_{ab}$ from a bounded interval $(a,b)$ by $X$ ($a,b\in \mathbb{R}$) together with the related location, namely $X_{\tau_{ab}}$.\\ \indent In this paper, we provide a representation of the joint pseudo-distribution of the vector $(\tau_{ab},X_{\tau_{ab}})$ by means of some determinants. The method we use is based on a Feynman-Kac-like functional related to the pseudo-process $X$ which leads to a boundary value problem. In particular, the pseudo-distribution of $X_{\tau_{ab}}$ admits a fine expression involving famous Hermite interpolating polynomials.\end{abstract}

\begin{small} \noindent\textit{MSC:} primary 60G20; secondary 60G40; 60K99\\[1ex] \noindent \textit{Keywords:} Pseudo-Brownian motion; First exit time; Laplace transform; Hermite interpolating polynomials \end{small}

\section{Introduction}

Let $N$ be an integer greater than $1$ and set $\kappa_{_{\!N}}=(-1)^{N-1}$.
We consider the pseudo-process $(X_t)_{t\ge 0}$ driven by the high-order
heat-type equation $\partial/\partial t=\kappa_{_{\!N}} \partial^{2N}\!/\partial x^{2N}$,
the so-called pseudo-Brownian motion. This is the pseudo-Markov process
with independent and stationary increments, associated with the \textit{signed}
heat-type kernel $p(t;x)$ which is the elementary solution of the foregoing equation.
The kernel $p(t;x)$ is characterized by its Fourier transform:
\[
\int_{-\infty}^{+\infty} \mathrm{e}^{\mathrm{i} ux}\,p(t;x)\,\mathrm{d} x
=\mathrm{e}^{-tu^{2N}}\!.
\]
We define the related transition kernel as $p(t;x,y)=p(t;x-y)$
for any time $t>0$ and any real numbers $x,y$, which represents
the pseudo-probability that the pseudo-process started at $x$
is in state $y$ at time $t$. In symbols,
\[
\mathbb{P}_{\!x}\{X_t\in \mathrm{d}y\}=p(t;x,y)\,\mathrm{d}y.
\]
The $\mathbb{P}_{\!x}$, $x\in\mathbb{R}$, define a family of \textit{signed}
measures whose total mass equals one:
\[
\mathbb{P}_{\!x}\{X_t\in\mathbb{R}\}=\int_{-\infty}^{+\infty}
p(t;x,y)\,\mathrm{d}y=1.
\]
The transition kernel $p(t;x,y)$ satisfies the backward and forward Kolmogorov
equations
\[
\frac{\partial p}{\partial t}(t;x,y)=\kappa_{_{\!N}}
\frac{\partial^{2N}\!p}{\partial x^{2N}}(t;x,y)
=\kappa_{_{\!N}} \frac{\partial^{2N}\!p}{\partial y^{2N}}(t;x,y).
\]

To be more precise, let us recall that the pseudo-Markov process
$(X_t)_{t\ge 0}$ is defined according to the usual chain rule:
for any positive integer $n$, for any times $t_1,\dots,t_n$ such that $0<t_1<\dots<t_n$
and any real numbers $x_1,\dots,x_n$, and setting $t_0=0$, $x_0=x$,
\begin{equation}\label{chain-rule}
\mathbb{P}_{\!x}\{X_{t_1}\in \mathrm{d}x_1,\dots,X_{t_n}\in \mathrm{d}x_n\}
=\left(\prod_{k=1}^n p(t_k-t_{k-1};x_{k-1},x_k)\right)\mathrm{d}x_1\dots\mathrm{d}x_n.
\end{equation}
In particular, by setting $T_t\phi(x)=\mathbb{E}_x[\phi(X_t)]$ for any time $t$,
any real number $x$ and any bounded $\mathcal{C}^{2N}$-function $\phi$,
the family $(T_t)_{t\ge 0}$ is a semi-group of operators whose infinitesimal
generator $\mathcal{G}$ is given by
\begin{equation}\label{infinitesimal-generator2}
\mathcal{G} \phi(x)=\lim_{h\to 0^+} \frac{1}{h}\left[\,\mathbb{E}_x[\phi(X_h)]-\phi(x)\right]
=\kappa_{_{\!N}} \,\phi^{(2N)}(x).
\end{equation}
Above and throughout the paper, for any non-negative integer $\ell$, $\phi^{(\ell)}$
stands for the derivative of $\phi$ of order $\ell$.

The very notion of pseudo-process in a general framework goes back to
Daletskii and Fomin in 1965 (\cite{df}).
The reader can find an extensive literature on the particular case of
pseudo-Brownian motion.
For instance, let us quote the works of Beghin, Cammarota, Hochberg, Krylov,
Lachal, Nakajima, Nikitin, Nishioka, Orsingher, Ragozina (\cite{bho} to \cite{cl2},
\cite{hoch,ho}, \cite{kry} to \cite{ors})
and the references therein. These papers deal with several functionals
related to pseudo-Brownian motion: sojourn time in a bound\-ed or not interval,
first overshooting time of a \textit{single} level, maximum or minimum up to a fixed time...
Let us mention also other interesting works : one dealing with high-order
Schr\"odinger-type equation $\partial/\partial t=\mathrm{i}\,\partial^{2N}\!/\partial x^{2N}$
which is related to the so-called Feynman-Kac measure \cite{asf},
as well as \cite{isf} in which the authors develop an alternative
and more probabilistic approach to pseudo-processes.

In~\cite{la2,la3}, we obtained the pseudo-distribution of the first overshooting
time of a single threshold, together with the corresponding location at this time.
In symbols, if $\tau_a$ denotes the first overshooting time of a fixed
level $a$ (upwards or downwards), we derived the joint pseudo-distribution of
the couple $(\tau_a,X_{\tau_a})$. Therein, we used an extension of famous
Spitzer's identity.
In~\cite{la2,la3} and, in the particular case $N=2$, in~\cite{nish1,nish2},
the authors observed a curious fact concerning the pseudo-distribution
of $X_{\tau_a}$: it is a linear combination of the Dirac distribution and its
successive derivatives (in the sense of Schwartz distributions):
\begin{equation}\label{dirac1}
\mathbb{P}_{\!x}\{X_{\tau_a}\in \mathrm{d} z\}/\mathrm{d} z
=\sum_{k=0}^{N-1}\frac{(a-x)^k}{k!}\,\delta_a^{(k)}(z).
\end{equation}
The quantity $\delta_a^{(k)}$ is to be understood as the functional acting
on test functions $\phi$ according as $\langle \delta_a^{(k)},\phi \rangle
=(-1)^k\phi^{(k)}(a)$. Formula~(\ref{dirac1}) says that the overshoot
through level $a$ should be actually concentrated at $a$.
The appearance of the Schwartz-Dirac distribution $\delta_a$
together with its successive derivatives can be interpreted by means
of ``multipoles'' in reference to electric dipoles as in~\cite{la2,la3} and,
for $N=2$, in~\cite{nish1,nish2}. In particular, therein, $\delta_a$ and
$\delta_a'$ are respectively named ``monopole'' and ``dipole''.
We refer the reader to~\cite{nish3} for a detailed account on monopoles and dipoles.
An explanation of this curious fact should be found in
considering a linear pseudo-random walk with $2N$ consecutive neighbours around
each sites. Indeed, after suitably normalizing such a walk, the neighbours
cluster into a single site and form a multipole; see the draft~\cite{la6}.

Till now, the first exit time from a bounded interval, or, equivalently,
the first overshooting time of a \textit{double} threshold has not yet
been considered. This is the purpose of this work.

Let us introduce the first exit time from $(a,b)$
($a,b$ being real numbers such that $a<b$) for $(X_t)_{t\ge 0}$:
\[
\tau_{ab}=\inf\{t\ge 0: X_t\notin(a,b)\}
\]
with the usual convention that $\inf\varnothing=+\infty$.
In this paper, we tackle the problem of finding the pseudo-distribution
related to the double threshold: we provide a representation for the joint pseudo-distribution
of the couple $(\tau_{ab},X_{\tau_{ab}})$.
This representation involves some determinants;
this is the object of Theorems~\ref{theo-Phi} and \ref{theo-joint-dist-Xtau-ab}.
For the location $X_{\tau_{ab}}$, we have the following counterpart
to~(\ref{dirac1}); see Theorem~\ref{theo-dist-Xab}:
\begin{equation}\label{dirac2}
\mathbb{P}_{\!x}\{X_{\tau_{ab}}\in \mathrm{d} z\}/\mathrm{d} z
=\sum_{k=0}^{N-1} (-1)^kH_k^-(x)\,\delta_a^{(k)}(z)
+\sum_{k=0}^{N-1} (-1)^kH_k^+(x)\,\delta_b^{(k)}(z)
\end{equation}
where the functions $H_k^{\pm}$, $0\le k\le N-1$, are the classical
Hermite interpolating polynomials of degree $(2N-1)$ related to points $a$ and
$b$ satisfying
\[
(H_k^-)^{(\ell)}(a)=(H_k^+)^{(\ell)}(b)=\delta_{k\ell},\quad
(H_k^+)^{(\ell)}(a)=(H_k^-)^{(\ell)}(b)=0, \quad 0\le \ell\le N-1.
\]
Above, the quantity $\delta_{k\ell}$ denotes the usual Kronecker symbol:
if $k=\ell$, $\delta_{k\ell}=1$, else $\delta_{k\ell}=0$.
They explicitly write as
\begin{align*}
H_k^-(x) &=\left(\frac{b-x}{b-a}\right)^{\!\!N}\frac{(x-a)^k}{k\,!}
\sum_{\ell=0}^{N-k-1}\binom{\ell+N-1}{\ell} \!\left(\frac{x-a}{b-a}\right)^{\!\!\ell},
\\
H_k^+(x) &=\left(\frac{x-a}{b-a}\right)^{\!\!N} \frac{(x-b)^k}{k\,!}
\sum_{\ell=0}^{N-k-1}\binom{\ell+N-1}{\ell} \!\left(\frac{b-x}{b-a}\right)^{\!\!\ell}.
\end{align*}
In particular, we can deduce from~(\ref{dirac2}) the ``ruin pseudo-probabilities'',
that is, the pseudo-probabilities of overshooting one level ($a$ or $b$) before
the other one; see Corollary~\ref{cor-ruin}.

These results have been announced without any proof in a survey on
pseudo-Brownian motion, \cite{la5}, after a conference held in Madrid (IWAP 2010).

Throughout the paper, the function $\varphi$ denotes any $(N-1)$ times
differentiable function.

\section{Feynman-Kac functional}

We start from the following fact: in~\cite{la2,la3}, we first obtained the
pseudo-distribution of the couple $(\sup_{0\le s\le t}X_s,X_t)$ by making use
of an extension of Spitzer's identity. From this, we deduced that of the couple
$(\tau_a,X_{\tau_a})$ and we made the observation that, for any $\lambda\ge 0$
and any $(N-1)$ times differentiable bounded function~$\varphi$,
the Feynman-Kac functional
$\Phi(x)=\mathbb{E}_x\!\left(\mathrm{e}^{-\lambda\tau_a}\varphi(X_{\tau_a})
\,\ind_{\{\tau_a<+\infty\}}\right)$ solves the boundary value problem
\begin{equation}\label{BVP-old}
\left\{\!\!\begin{array}{l}
\kappa_{_{\!N}}\Phi^{(2N)}(x)=\lambda \,\Phi(x),\quad x\in(-\infty,a)\text{ (or $x\in(a,+\infty)$)},
\\[1ex]
\Phi^{(k)}(a)=\varphi^{(k)}(a)\quad \text{for } k\in\{0,1,\dots,N-1\}.
\end{array}\right.
\end{equation}
So, we state the heuristic that an analogous boundary value problem
should hold for the Feynman-Kac functional related to $\tau_{ab}$.
The results obtained here through this approach coincide with
limiting results deduced from a suitable pseudo-random walk
studied in~\cite{la6}. Moreover, when taking the limit as $a$ goes to $-\infty$
or $b$ goes to $+\infty$ in the present results, we retrieve the pseudo-distribution
of $(\tau_a,X_{\tau_a})$ obtained in~\cite{la3}.
So, these observations comfort us in our heuristic.
Actually, our purpose in this work is essentially concentrated in calculating
the pseudo-distri\-bution of $(\tau_{ab},X_{\tau_{ab}})$.

As pointed out in several works on pseudo-processes, pseudo-Brownian motion
is properly defined only on the set of dyadic times and ad-hoc definitions
should be taken for computing certain functionals of this pseudo-process
depending on a continuous set of times; see, e.g., \cite{la3} and, in the particular
case $N=2$, \cite{nish2}. Roughly speaking, the dense subset of dyadic times
is appropriate because of the usual property that for any $n\in\mathbb{N}$,
$\{k/2^n,k\in\mathbb{N}\} \subset \{k/2^{n+1},k\in\mathbb{N}\}$.
Indeed, this latter permits to view the pseudo-process $(X_t)_{t\ge 0}$ as an informal limit
of the family of step-processes $(X_{n,t})_{t\ge 0}$ defined according to
the following sampling procedure:
$$
X_{n,t}=\sum_{k=0}^{\infty} \mathbbm{1}_{[k/2^n,(k+1)/2^n)}(t) X_{k/2^n}.
$$
For each fixed $n\in\mathbb{N}$, the sequence $(X_{k/2^n})_{k\in\mathbb{N}}$ can be
correctly defined thanks to~(\ref{chain-rule}). But the fact that
$\int_{-\infty}^{+\infty} |p(t;x)|\,\mathrm{d} x>1$
prevent us from applying the classical extension theorem of Kolmogorov
for finding \textit{a priori} a $\sigma$-additive measure on the usual space
of right-continuous functions on $[0,+\infty)$ which have left-hand limits,
measure whose finite projections would yield the finite-dimensional pseudo-distributions
of the sequence $(X_{k/2^n})_{k\in\mathbb{N}}$.

For our concern, we set
\[
\tau_{ab,n}=\frac{1}{2^n} \min\{k\in\mathbb{N}:X_{k/2^n}\notin (a,b)\}
\]
and, for $x\in(a,b)$,
\[
\Phi_n(x)=\mathbb{E}_x\!\left(\mathrm{e}^{-\lambda\tau_{ab,n}}\varphi(X_{\tau_{ab,n}})
\,\ind_{\{\tau_{ab,n}<+\infty\}}\right)\!.
\]
Then, we define the Feynman-Kac functional
$\Phi(x)=\mathbb{E}_x\!\left(\mathrm{e}^{-\lambda\tau_{ab}}\varphi(X_{\tau_{ab}})
\,\ind_{\{\tau_{ab}<+\infty\}}\right)$ as the limit
\[
\Phi(x)\overset{\text{def}}{=}\lim_{n\to +\infty}\Phi_n(x)
\]
and we state below the analogue to~(\ref{BVP-old}).
%
\begin{heuristic}\label{theo-FK}
For any $\lambda\ge 0$ and any $(N-1)$ times differentiable bounded
function~$\varphi$, the Feynman-Kac functional
$\Phi(x)=\mathbb{E}_x\!\left(\mathrm{e}^{-\lambda\tau_{ab}}\varphi(X_{\tau_{ab}})
\,\ind_{\{\tau_{ab}<+\infty\}}\right)$ solves the boundary value problem
\begin{equation}\label{BVP1}
\left\{\!\!\begin{array}{l}
\kappa_{_{\!N}}\Phi^{(2N)}(x)=\lambda \,\Phi(x),\quad x\in(a,b),
\\[1ex]
\Phi^{(k)}(a)=\varphi^{(k)}(a)\quad\text{and}\quad \Phi^{(k)}(b)
=\varphi^{(k)}(b) \quad \text{for } k\in\{0,1,\dots,N-1\}.
\end{array}\right.
\end{equation}
\end{heuristic}
\section{Joint pseudo-distribution of $\left(\tau_{ab},X_{\tau_{ab}}\right)$}

In this section, we solve boundary value problem~(\ref{BVP1}) in order
to derive the joint pseudo-probab\-ility of $\left(\tau_{ab},X_{\tau_{ab}}\right)$.
In this way, if we choose $\varphi(x)=\mathrm{e}^{\mathrm{i}\mu x}$, $\mu\in\mathbb{R}$,
we first obtain its Laplace-Fourier transform. Actually, the results
we derived hold true for any $(N-1)$ times differentiable function~$\varphi$.

Let us introduce the $(2N)$th roots of $\kappa_{_{\!N}}$:
$\theta_{\ell}=\mathrm{e}^{\mathrm{i}\frac{2\ell+N-1}{2N} \pi}$,
$1\le \ell\le 2N$. We have $\theta_{\ell}^{2N}=\kappa_{_{\!N}}$.
For any complex number $z$, we set $e_{\lambda}^z=\mathrm{e}^{\lambda^{1/(2N)}z}$.

%
\begin{theorem}\label{theo-Phi}
The Feynman-Kac functional related to $\left(\tau_{ab},X_{\tau_{ab}}\right)$ admits the following
representation:
\begin{equation}\label{exp-Phi}
\mathbb{E}_x\!\left(\mathrm{e}^{-\lambda\tau_{ab}}\varphi(X_{\tau_{ab}})
\,\ind_{\{\tau_{ab}<+\infty\}}\right)
=\sum_{k=0}^{N-1} \lambda^{-\frac{k}{2N}}\frac{\Delta_k^-(\lambda;x)}{\Delta(\lambda)}\,\varphi^{(k)}(a)
+\sum_{k=0}^{N-1} \lambda^{-\frac{k}{2N}}\frac{\Delta_k^+(\lambda;x)}{\Delta(\lambda)}\,\varphi^{(k)}(b)
\end{equation}
where the quantities $\Delta(\lambda)$ and $\Delta_k^{\pm}(\lambda;x)$ are the
determinants below:
\[
\Delta(\lambda)
=\begin{vmatrix}
\,e_{\lambda}^{\theta_1a}                 & \cdots & e_{\lambda}^{\theta_{2N}a}
\\[.5ex]
\,\theta_1\,e_{\lambda}^{\theta_1a}       & \cdots & \theta_{2N}\,e_{\lambda}^{\theta_{2N}a}
\\[.5ex]
\,\vdots                                  &        &\vdots
\\[.5ex]
\,\theta_1^{N-1}\,e_{\lambda}^{\theta_1a} & \cdots & \theta_{2N}^{N-1}\,e_{\lambda}^{\theta_{2N}a}
\\[-0.5ex]
\hdotsfor{3}
\\
\,e_{\lambda}^{\theta_1b}                 & \cdots & e_{\lambda}^{\theta_{2N}b}
\\[.5ex]
\,\theta_1\,e_{\lambda}^{\theta_1b}       & \cdots & \theta_{2N}\,e_{\lambda}^{\theta_{2N}b}
\\[.5ex]
\,\vdots                                  &        &\vdots
\\[.5ex]
\,\theta_1^{N-1}\,e_{\lambda}^{\theta_1b} & \cdots & \theta_{2N}^{N-1}\,e_{\lambda}^{\theta_{2N}b}
\end{vmatrix}
\]
and
\[
\Delta_k^-(\lambda;x)=\begin{vmatrix}
\,e_{\lambda}^{\theta_1a}                 & \cdots & e_{\lambda}^{\theta_{2N}a}
\\[.5ex]
\,\vdots                                  &        & \vdots
\\[.5ex]
\,\theta_1^{k-1}\,e_{\lambda}^{\theta_1a} & \cdots & \theta_{2N}^{k-1}\,e_{\lambda}^{\theta_{2N}a}
\\[.5ex]
\,e_{\lambda}^{\theta_1x}                 & \cdots & e_{\lambda}^{\theta_{2N}x}
\\[.5ex]
\,\theta_1^{k+1}\,e_{\lambda}^{\theta_1a} & \cdots & \theta_{2N}^{k+1}\,e_{\lambda}^{\theta_{2N}a}
\\[.5ex]
\,\vdots                                  &        & \vdots
\\[.5ex]
\,\theta_1^{N-1}\,e_{\lambda}^{\theta_1a} & \cdots & \theta_{2N}^{N-1}\,e_{\lambda}^{\theta_{2N}a}
\\[-0.5ex]
\hdotsfor{3}
\\
\,e_{\lambda}^{\theta_1b}                 & \cdots & e_{\lambda}^{\theta_{2N}b}
\\[.5ex]
\,\vdots                                  &        & \vdots
\\[.5ex]
\,\theta_1^{N-1}\,e_{\lambda}^{\theta_1b} & \cdots & \theta_{2N}^{N-1}\,e_{\lambda}^{\theta_{2N}b}
\end{vmatrix},
\quad
\Delta_k^+(\lambda;x)=\begin{vmatrix}
\,e_{\lambda}^{\theta_1a}                 & \cdots & e_{\lambda}^{\theta_{2N}a}
\\[.5ex]
\,\vdots                                  &        & \vdots
\\[.5ex]
\,\theta_1^{N-1}\,e_{\lambda}^{\theta_1a} & \cdots & \theta_{2N}^{N-1}\,e_{\lambda}^{\theta_{2N}a}
\\[-0.5ex]
\hdotsfor{3}
\\
\,e_{\lambda}^{\theta_1b}                 & \cdots & e_{\lambda}^{\theta_{2N}b}
\\[.5ex]
\,\vdots                                  &        & \vdots
\\[.5ex]
\,\theta_1^{k-1}\,e_{\lambda}^{\theta_1b} & \cdots & \theta_{2N}^{k-1}\,e_{\lambda}^{\theta_{2N}b}
\\[.5ex]
\,e_{\lambda}^{\theta_1x}                 & \cdots & e_{\lambda}^{\theta_{2N}x}
\\[.5ex]
\,\theta_1^{k+1}\,e_{\lambda}^{\theta_1b} & \cdots & \theta_{2N}^{k+1}\,e_{\lambda}^{\theta_{2N}b}
\\[.5ex]
\,\vdots                                  &        & \vdots
\\[.5ex]
\,\theta_1^{N-1}\,e_{\lambda}^{\theta_1b} & \cdots & \theta_{2N}^{N-1}\,e_{\lambda}^{\theta_{2N}b}
\end{vmatrix}.
\]
The functions $x\mapsto \Delta_k^{\pm}(\lambda;x)$, $0\le k\le N-1$,
are the solutions of the boundary value problems
\[
\left\{\!\!\begin{array}{l}
(\Delta_k^-)^{(2N)}(\lambda;x)
=\kappa_{_{\!N}}\lambda \,\Delta_k^-(\lambda;x),
\\[.5ex]
(\Delta_k^-)^{(\ell)}(\lambda;a)
=\delta_{k\ell} \,\lambda^{\ell/(2N)}\Delta(\lambda),\;
(\Delta_k^-)^{(\ell)}(\lambda;b)=0
\quad\text{for } \ell\in\{0,\dots,N-1\},
\end{array}\right.
\]
\[
\left\{\!\!\begin{array}{l}
(\Delta_k^+)^{(2N)}(\lambda;x)
=\kappa_{_{\!N}}\lambda \,\Delta_k^+(\lambda;x),
\\[.5ex]
(\Delta_k^+)^{(\ell)}(\lambda;a)=0,\;
(\Delta_k^+)^{(\ell)}(\lambda;b)=
\delta_{k\ell} \,\lambda^{\ell/(2N)}\Delta(\lambda)
\quad\text{for } \ell\in\{0,\dots,N-1\}.
\end{array}\right.
\]
\end{theorem}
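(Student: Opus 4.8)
The plan is to solve the boundary value problem (\ref{BVP1}) explicitly and identify the solution with the claimed determinantal expression. First I would observe that the ordinary differential equation $\kappa_{_{\!N}}\Phi^{(2N)}(x)=\lambda\,\Phi(x)$, i.e. $\Phi^{(2N)}(x)=\kappa_{_{\!N}}\lambda\,\Phi(x)$ (using $\kappa_{_{\!N}}^2=1$), is linear of order $2N$ with constant coefficients; its characteristic equation $r^{2N}=\kappa_{_{\!N}}\lambda$ has the $2N$ simple roots $r_\ell=\lambda^{1/(2N)}\theta_\ell$, $1\le\ell\le 2N$, where the $\theta_\ell$ are precisely the $(2N)$th roots of $\kappa_{_{\!N}}$ introduced just before the statement. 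Hence every solution is a linear combination $\Phi(x)=\sum_{\ell=1}^{2N} c_\ell\, e_\lambda^{\theta_\ell x}$, and $\Phi^{(j)}(x)=\sum_{\ell=1}^{2N} c_\ell\,\lambda^{j/(2N)}\theta_\ell^{\,j}\,e_\lambda^{\theta_\ell x}$.

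Next I would impose the $2N$ boundary conditions $\Phi^{(k)}(a)=\varphi^{(k)}(a)$ and $\Phi^{(k)}(b)=\varphi^{(k)}(b)$ for $k=0,\dots,N-1$. This yields a linear system of $2N$ equations in the $2N$ unknowns $c_1,\dots,c_{2N}$, whose coefficient matrix is exactly the matrix whose determinant is $\Delta(\lambda)$ after dividing the $k$th block-row by $\lambda^{k/(2N)}$ — so the system's determinant is $\Delta(\lambda)$ up to the harmless nonzero factor $\prod_{k=1}^{N-1}\lambda^{-k/N}$ (from both the $a$-block and the $b$-block). Assuming $\Delta(\lambda)\neq 0$ (which holds for $\lambda>0$ and which I would note extends by analytic continuation), Cramer's rule gives $c_\ell$ as a ratio of determinants. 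Substituting back into $\Phi(x)=\sum_\ell c_\ell e_\lambda^{\theta_\ell x}$ and expanding the resulting sum-of-determinants along the appropriate row, I would recognize the coefficient of $\varphi^{(k)}(a)$ as $\lambda^{-k/(2N)}\Delta_k^-(\lambda;x)/\Delta(\lambda)$ and that of $\varphi^{(k)}(b)$ as $\lambda^{-k/(2N)}\Delta_k^+(\lambda;x)/\Delta(\lambda)$: indeed, replacing a row of derivative-order $k$ evaluated at $a$ (resp. $b$) by the row $(e_\lambda^{\theta_1 x},\dots,e_\lambda^{\theta_{2N} x})$ is precisely the Cramer substitution, once one tracks the powers of $\lambda^{1/(2N)}$ pulled out of each row. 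The factor $\lambda^{-k/(2N)}$ then accounts exactly for the mismatch between the row $\theta_\ell^{\,k} e_\lambda^{\theta_\ell x}$ appearing in $\Delta_k^\pm$ and the row $\lambda^{k/(2N)}\theta_\ell^{\,k} e_\lambda^{\theta_\ell x}$ that genuinely enters the derivative condition. Finally, to obtain~(\ref{exp-Phi}) for the Feynman-Kac functional itself, I invoke Heuristic~\ref{theo-FK}, which asserts that $\Phi(x)=\mathbb{E}_x(\mathrm{e}^{-\lambda\tau_{ab}}\varphi(X_{\tau_{ab}})\,\ind_{\{\tau_{ab}<+\infty\}})$ solves exactly this boundary value problem.

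For the second part of the statement — that $x\mapsto\Delta_k^\pm(\lambda;x)$ solve their own boundary value problems — I would argue directly from the determinantal formulas. Expanding $\Delta_k^-(\lambda;x)$ along the single row containing $x$, namely $(e_\lambda^{\theta_1 x},\dots,e_\lambda^{\theta_{2N} x})$, writes $\Delta_k^-(\lambda;x)=\sum_{\ell=1}^{2N}(-1)^{s}M_\ell\,e_\lambda^{\theta_\ell x}$ for suitable cofactors $M_\ell$ not depending on $x$; differentiating $2N$ times and using $\theta_\ell^{2N}=\kappa_{_{\!N}}$ gives $(\Delta_k^-)^{(2N)}(\lambda;x)=\kappa_{_{\!N}}\lambda\,\Delta_k^-(\lambda;x)$ term by term. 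For the boundary conditions, evaluating $(\Delta_k^-)^{(\ell)}(\lambda;\cdot)$ at $x=b$ produces a determinant with a repeated row (the $x$-row coincides with one of the existing $b$-rows when $\ell\le N-1$, since $(\theta^{\ell}e_\lambda^{\theta b})$ is already present), hence vanishes; and evaluating at $x=a$ with derivative order $\ell$ gives $\lambda^{\ell/(2N)}$ times a determinant whose $x$-row becomes $\theta_\ell^{\,\ell}e_\lambda^{\theta_\ell a}$, which coincides with the original $a$-block row of order $k$ only when $\ell=k$ (giving back $\Delta(\lambda)$ after reordering rows — the sign works out because no transposition is needed) and otherwise has a repeated row, whence $\delta_{k\ell}\,\lambda^{\ell/(2N)}\Delta(\lambda)$. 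The symmetric argument handles $\Delta_k^+$. The main obstacle I anticipate is bookkeeping: tracking the exact powers $\lambda^{j/(2N)}$ distributed across the rows and the alternating signs from cofactor expansions and row permutations, so that the normalization $\lambda^{-k/(2N)}/\Delta(\lambda)$ and the Kronecker-$\delta$ boundary data come out precisely as stated rather than off by a sign or a power of $\lambda$; a secondary point requiring care is the justification that $\Delta(\lambda)\ne 0$ (a generalized Vandermonde / exponential-determinant non-vanishing statement), which I would treat for $\lambda>0$ and extend by analyticity.
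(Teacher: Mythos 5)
Your proposal is correct and follows essentially the same route as the paper: reduce to the $2N\times 2N$ linear system for the coefficients of $\sum_\ell c_\ell e_\lambda^{\theta_\ell x}$ after normalizing each derivative condition by $\lambda^{-k/(2N)}$, apply Cramer's rule, expand the Cramer determinants along the substituted column to isolate the coefficients of $\varphi^{(k)}(a)$ and $\varphi^{(k)}(b)$, and verify the boundary value problems for $\Delta_k^\pm$ by row-expansion, term-by-term differentiation via $\theta_\ell^{2N}=\kappa_{_{\!N}}$, and repeated-row arguments. The bookkeeping concerns you flag (powers of $\lambda^{1/(2N)}$ and the Kronecker-$\delta$ normalization) are exactly the points the paper checks, and they work out as you anticipate.
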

%
\begin{proof}
The solution of linear boundary value problem~(\ref{BVP1}) has the form
$\Phi(x)=\sum_{\ell=1}^{2N} \alpha_{\ell}\,e_{\lambda}^{\theta_{\ell}x}$
where the coefficients $\alpha_{\ell}$, $1\le \ell \le 2N$, satisfy
the linear system below:
\begin{equation}\label{linear-system}
\left\{\begin{array}{ll}
\displaystyle\sum_{\ell=1}^{2N} \theta_{\ell}^k \,e_{\lambda}^{\theta_{\ell}a}\,\alpha_{\ell}
=\lambda^{-\frac{k}{2N}}\varphi^{(k)}(a),& 0\le k\le N-1,
\\[3ex]
\displaystyle\sum_{\ell=1}^{2N} \theta_{\ell}^k \,e_{\lambda}^{\theta_{\ell}b}\,\alpha_{\ell}
=\lambda^{-\frac{k}{2N}}\varphi^{(k)}(b),& 0\le k\le N-1.
\end{array}\right.
\end{equation}
This system can be solved by using Cramer's formulae:
\[
\alpha_{\ell}=\frac{\Delta_{\ell}(\lambda,\varphi)}{\Delta(\lambda)},
\quad 1\le\ell\le 2N,
\]
where $\Delta(\lambda)$ is the determinant displayed in Theorem~\ref{theo-Phi}
and $\Delta_{\ell}(\lambda,\varphi)$ is the determinant deduced from
$\Delta(\lambda)$ by replacing its $\ell$th column by the right-hand
side of~(\ref{linear-system}), that is
\[
\Delta_{\ell}(\lambda,\varphi)
=\begin{vmatrix}
\,e_{\lambda}^{\theta_1a}                 & \cdots & e_{\lambda}^{\theta_{\ell-1}a}                        & \varphi(a)                                  & e_{\lambda}^{\theta_{\ell+1}a}                        & \cdots & e_{\lambda}^{\theta_{2N}a}
\\[.5ex]
\,\theta_1\,e_{\lambda}^{\theta_1a}       & \cdots & \theta_{\ell-1}\,e_{\lambda}^{\theta_{\ell-1}a}       & \lambda^{-\frac{1}{2N}}\varphi'(a)          & \theta_{\ell+1}\,e_{\lambda}^{\theta_{\ell+1}a}       & \cdots & \theta_{2N}\,e_{\lambda}^{\theta_{2N}a}
\\[.5ex]
\,\vdots                                  &        & \vdots                                                & \vdots                                      & \vdots                                                &        & \vdots
\\[.5ex]
\,\theta_1^{N-1}\,e_{\lambda}^{\theta_1a} & \cdots & \theta_{\ell-1}^{N-1}\,e_{\lambda}^{\theta_{\ell-1}a} & \lambda^{-\frac{N-1}{2N}}\varphi^{(N-1)}(a) & \theta_{\ell+1}^{N-1}\,e_{\lambda}^{\theta_{\ell+1}a} & \cdots & \theta_{2N}^{N-1}\,e_{\lambda}^{\theta_{2N}a}
\\[-0.5ex]
\hdotsfor{7}
\\
\,e_{\lambda}^{\theta_1b}                 & \cdots & e_{\lambda}^{\theta_{\ell-1}b}                        & \varphi(b)                                  & e_{\lambda}^{\theta_{\ell+1}b}                        & \cdots & e_{\lambda}^{\theta_{2N}b}
\\[.5ex]
\,\theta_1\,e_{\lambda}^{\theta_1b}       & \cdots & \theta_{\ell-1}\,e_{\lambda}^{\theta_{\ell-1}b}       & \lambda^{-\frac{1}{2N}}\varphi'(b)          & \theta_{\ell+1}\,e_{\lambda}^{\theta_{\ell+1}b}       & \cdots & \theta_{2N}\,e_{\lambda}^{\theta_{2N}b}
\\[.5ex]
\,\vdots                                  &        & \vdots                                                & \vdots                                      & \vdots                                                &        & \vdots
\\[.5ex]
\,\theta_1^{N-1}\,e_{\lambda}^{\theta_1b} & \cdots & \theta_{\ell-1}^{N-1}\,e_{\lambda}^{\theta_{\ell-1}b} & \lambda^{-\frac{N-1}{2N}}\varphi^{(N-1)}(b) & \theta_{\ell+1}^{N-1}\,e_{\lambda}^{\theta_{\ell+1}b} & \cdots & \theta_{2N}^{N-1}\,e_{\lambda}^{\theta_{2N}b}
\end{vmatrix}.
\]
The determinant $\Delta_{\ell}(\lambda,\varphi)$ can be expanded
with respect to its $\ell$th column:
\[
\Delta_{\ell}(\lambda,\varphi)=\sum_{k=0}^{N-1} \lambda^{-\frac{k}{2N}}
\Delta_{k\ell}^-(\lambda)\,\varphi^{(k)}(a)+\sum_{k=0}^{N-1} \lambda^{-\frac{k}{2N}}
\Delta_{k\ell}^+(\lambda)\,\varphi^{(k)}(b)
\]
with
\begin{align*}
\Delta_{k\ell}^-(\lambda)
&
=\begin{vmatrix}
\,e_{\lambda}^{\theta_1a}                 & \cdots & e_{\lambda}^{\theta_{\ell-1}a}                        & 0      & e_{\lambda}^{\theta_{\ell+1}a}                        & \cdots & e_{\lambda}^{\theta_{2N}a}
\\[.5ex]
\,\vdots                                  &        & \vdots                                                & \vdots & \vdots                                                &        & \vdots
\\[.5ex]
\,\theta_1^{k-1}\,e_{\lambda}^{\theta_1a} & \cdots & \theta_{\ell-1}^{k-1}\,e_{\lambda}^{\theta_{\ell-1}a} & 0      & \theta_{\ell+1}^{k-1}\,e_{\lambda}^{\theta_{\ell+1}a} & \cdots & \theta_{2N}^{k-1}\,e_{\lambda}^{\theta_{2N}a}
\\[.5ex]
\,\theta_1^k\,e_{\lambda}^{\theta_1a}     & \cdots & \theta_{\ell-1}^k\,e_{\lambda}^{\theta_{\ell-1}a}     & 1      & \theta_{\ell+1}^k\,e_{\lambda}^{\theta_{\ell+1}a}     & \cdots & \theta_{2N}^k\,e_{\lambda}^{\theta_{2N}a}
\\[.5ex]
\,\theta_1^{k+1}\,e_{\lambda}^{\theta_1a} & \cdots & \theta_{\ell-1}^{k+1}\,e_{\lambda}^{\theta_{\ell-1}a} & 0      & \theta_{\ell+1}^{k+1}\,e_{\lambda}^{\theta_{\ell+1}a} & \cdots & \theta_{2N}^{k+1}\,e_{\lambda}^{\theta_{2N}a}
\\[.5ex]
\,\vdots                                  &        & \vdots                                                & \vdots & \vdots                                                &        & \vdots
\\[.5ex]
\,\theta_1^{N-1}\,e_{\lambda}^{\theta_1a} & \cdots & \theta_{\ell-1}^{N-1}\,e_{\lambda}^{\theta_{\ell-1}a} & 0      & \theta_{\ell+1}^{N-1}\,e_{\lambda}^{\theta_{\ell+1}a} & \cdots & \theta_{2N}^{N-1}\,e_{\lambda}^{\theta_{2N}a}
\\[-0.5ex]
\hdotsfor{7}
\\
\,e_{\lambda}^{\theta_1b}                 & \cdots & e_{\lambda}^{\theta_{\ell-1}b}                        & 0      & e_{\lambda}^{\theta_{\ell+1}b}                        & \cdots & e_{\lambda}^{\theta_{2N}b}
\\[.5ex]
\,\vdots                                  &        & \vdots                                                & \vdots & \vdots                                                &        & \vdots
\\[.5ex]
\,\theta_1^{N-1}\,e_{\lambda}^{\theta_1b} & \cdots & \theta_{\ell-1}^{N-1}\,e_{\lambda}^{\theta_{\ell-1}b} & 0      & \theta_{\ell+1}^{N-1}\,e_{\lambda}^{\theta_{\ell+1}b} & \cdots & \theta_{2N}^{N-1}\,e_{\lambda}^{\theta_{2N}b}
\end{vmatrix}
\\[3ex]
\lqn{}
&
=\begin{vmatrix}
\,e_{\lambda}^{\theta_1a}                 & \cdots & e_{\lambda}^{\theta_{\ell-1}a}                        & 0      & e_{\lambda}^{\theta_{\ell+1}a}                        & \cdots & e_{\lambda}^{\theta_{2N}a}
\\[.5ex]
\,\vdots                                  &        & \vdots                                                & \vdots & \vdots                                                &        & \vdots
\\[.5ex]
\,\theta_1^{k-1}\,e_{\lambda}^{\theta_1a} & \cdots & \theta_{\ell-1}^{k-1}\,e_{\lambda}^{\theta_{\ell-1}a} & 0      & \theta_{\ell+1}^{k-1}\,e_{\lambda}^{\theta_{\ell+1}a} & \cdots & \theta_{2N}^{k-1}\,e_{\lambda}^{\theta_{2N}a}
\\[.5ex]
\,0                                       & \cdots & 0                                                     & 1      & 0                                                     & \cdots &
\\[.5ex]
\,\theta_1^{k+1}\,e_{\lambda}^{\theta_1a} & \cdots & \theta_{\ell-1}^{k+1}\,e_{\lambda}^{\theta_{\ell-1}a} & 0      & \theta_{\ell+1}^{k+1}\,e_{\lambda}^{\theta_{\ell+1}a} & \cdots & \theta_{2N}^{k+1}\,e_{\lambda}^{\theta_{2N}a}
\\[.5ex]
\,\vdots                                  &        & \vdots                                                & \vdots & \vdots                                                &        & \vdots
\\[.5ex]
\,\theta_1^{N-1}\,e_{\lambda}^{\theta_1a} & \cdots & \theta_{\ell-1}^{N-1}\,e_{\lambda}^{\theta_{\ell-1}a} & 0      & \theta_{\ell+1}^{N-1}\,e_{\lambda}^{\theta_{\ell+1}a} & \cdots & \theta_{2N}^{N-1}\,e_{\lambda}^{\theta_{2N}a}
\\[-0.5ex]
\hdotsfor{7}
\\
\,e_{\lambda}^{\theta_1b}                 & \cdots & e_{\lambda}^{\theta_{\ell-1}b}                        & 0      & e_{\lambda}^{\theta_{\ell+1}b}                        & \cdots & e_{\lambda}^{\theta_{2N}b}
\\[.5ex]
\,\vdots                                  &        & \vdots                                                & \vdots & \vdots                                                &        & \vdots
\\[.5ex]
\,\theta_1^{N-1}\,e_{\lambda}^{\theta_1b} & \cdots & \theta_{\ell-1}^{N-1}\,e_{\lambda}^{\theta_{\ell-1}b} & 0      & \theta_{\ell+1}^{N-1}\,e_{\lambda}^{\theta_{\ell+1}b} & \cdots & \theta_{2N}^{N-1}\,e_{\lambda}^{\theta_{2N}b}
\end{vmatrix}
\\[3ex]
\lqn{}
&
=\begin{vmatrix}
\,e_{\lambda}^{\theta_1a}                 & \cdots & e_{\lambda}^{\theta_{\ell-1}a}                        & e_{\lambda}^{\theta_{\ell}a}                       & e_{\lambda}^{\theta_{\ell+1}a}                        & \cdots & e_{\lambda}^{\theta_{2N}a}
\\[.5ex]
\,\vdots                                  &        & \vdots                                                & \vdots                                             & \vdots                                                &        & \vdots
\\[.5ex]
\,\theta_1^{k-1}\,e_{\lambda}^{\theta_1a} & \cdots & \theta_{\ell-1}^{k-1}\,e_{\lambda}^{\theta_{\ell-1}a} & \theta_{\ell}^{k-1}\,e_{\lambda}^{\theta_{\ell}a}  & \theta_{\ell+1}^{k-1}\,e_{\lambda}^{\theta_{\ell+1}a} & \cdots & \theta_{2N}^{k-1}\,e_{\lambda}^{\theta_{2N}a}
\\[.5ex]
\,0                                       & \cdots & 0                                                     & 1                                                  & 0                                                     & \cdots &
\\[.5ex]
\,\theta_1^{k+1}\,e_{\lambda}^{\theta_1a} & \cdots & \theta_{\ell-1}^{k+1}\,e_{\lambda}^{\theta_{\ell-1}a} & \theta_{\ell}^{k+1}\,e_{\lambda}^{\theta_{\ell}a}  & \theta_{\ell+1}^{k+1}\,e_{\lambda}^{\theta_{\ell+1}a} & \cdots & \theta_{2N}^{k+1}\,e_{\lambda}^{\theta_{2N}a}
\\[.5ex]
\,\vdots                                  &        & \vdots                                                & \vdots                                             & \vdots                                                &        & \vdots
\\[.5ex]
\,\theta_1^{N-1}\,e_{\lambda}^{\theta_1a} & \cdots & \theta_{\ell-1}^{N-1}\,e_{\lambda}^{\theta_{\ell-1}a} & \theta_{\ell}^{N-1}\,e_{\lambda}^{\theta_{\ell}a}  & \theta_{\ell+1}^{N-1}\,e_{\lambda}^{\theta_{\ell+1}a} & \cdots & \theta_{2N}^{N-1}\,e_{\lambda}^{\theta_{2N}a}
\\[-0.5ex]
\hdotsfor{7}
\\
\,e_{\lambda}^{\theta_1b}                 & \cdots & e_{\lambda}^{\theta_{\ell-1}b}                        & e_{\lambda}^{\theta_{\ell}b}                       & e_{\lambda}^{\theta_{\ell+1}b}                        & \cdots & e_{\lambda}^{\theta_{2N}b}
\\[.5ex]
\,\vdots                                  &        & \vdots                                                & \vdots                                             & \vdots                                                &        & \vdots
\\[.5ex]
\,\theta_1^{N-1}\,e_{\lambda}^{\theta_1b} & \cdots & \theta_{\ell-1}^{N-1}\,e_{\lambda}^{\theta_{\ell-1}b} & \theta_{\ell}^{N-1}\,e_{\lambda}^{\theta_{\ell}b}  & \theta_{\ell+1}^{N-1}\,e_{\lambda}^{\theta_{\ell+1}b} & \cdots & \theta_{2N}^{N-1}\,e_{\lambda}^{\theta_{2N}b}
\end{vmatrix}
\end{align*}
and, in the same manner,
\begin{align*}
\Delta_{k\ell}^+(\lambda)
&
=\begin{vmatrix}
\,e_{\lambda}^{\theta_1a}                 & \cdots & e_{\lambda}^{\theta_{\ell-1}a}                        & e_{\lambda}^{\theta_{\ell}a}                       & e_{\lambda}^{\theta_{\ell+1}a}                        & \cdots & e_{\lambda}^{\theta_{2N}a}
\\[.5ex]
\,\vdots                                  &        & \vdots                                                & \vdots                                             & \vdots                                                &        & \vdots
\\[.5ex]
\,\theta_1^{N-1}\,e_{\lambda}^{\theta_1a} & \cdots & \theta_{\ell-1}^{N-1}\,e_{\lambda}^{\theta_{\ell-1}a} & \theta_{\ell}^{N-1}\,e_{\lambda}^{\theta_{\ell}a}  & \theta_{\ell+1}^{N-1}\,e_{\lambda}^{\theta_{\ell+1}a} & \cdots & \theta_{2N}^{N-1}\,e_{\lambda}^{\theta_{2N}a}
\\[-0.5ex]
\hdotsfor{7}
\\
\,e_{\lambda}^{\theta_1b}                 & \cdots & e_{\lambda}^{\theta_{\ell-1}b}                        & e_{\lambda}^{\theta_{\ell}b}                       & e_{\lambda}^{\theta_{\ell+1}b}                        & \cdots & e_{\lambda}^{\theta_{2N}b}
\\[.5ex]
\,\vdots                                  &        & \vdots                                                & \vdots                                             & \vdots                                                &        & \vdots
\\[.5ex]
\,\theta_1^{k-1}\,e_{\lambda}^{\theta_1b} & \cdots & \theta_{\ell-1}^{k-1}\,e_{\lambda}^{\theta_{\ell-1}b} & \theta_{\ell}^{k-1}\,e_{\lambda}^{\theta_{\ell}b}  & \theta_{\ell+1}^{k-1}\,e_{\lambda}^{\theta_{\ell+1}b} & \cdots & \theta_{2N}^{k-1}\,e_{\lambda}^{\theta_{2N}a}
\\[.5ex]
\,0                                       & \cdots & 0                                                     & 1                                                  & 0                                                     & \cdots &
\\[.5ex]
\,\theta_1^{k+1}\,e_{\lambda}^{\theta_1b} & \cdots & \theta_{\ell-1}^{k+1}\,e_{\lambda}^{\theta_{\ell-1}b} & \theta_{\ell}^{k+1}\,e_{\lambda}^{\theta_{\ell}b}  & \theta_{\ell+1}^{k+1}\,e_{\lambda}^{\theta_{\ell+1}b} & \cdots & \theta_{2N}^{k+1}\,e_{\lambda}^{\theta_{2N}a}
\\[.5ex]
\,\vdots                                  &        & \vdots                                                & \vdots                                             & \vdots                                                &        & \vdots
\\[.5ex]
\,\theta_1^{N-1}\,e_{\lambda}^{\theta_1b} & \cdots & \theta_{\ell-1}^{N-1}\,e_{\lambda}^{\theta_{\ell-1}b} & \theta_{\ell}^{N-1}\,e_{\lambda}^{\theta_{\ell}b}  & \theta_{\ell+1}^{N-1}\,e_{\lambda}^{\theta_{\ell+1}b} & \cdots & \theta_{2N}^{N-1}\,e_{\lambda}^{\theta_{2N}b}
\end{vmatrix}.
\end{align*}
With these settings at hand, we can write the solution of~(\ref{BVP1}):
\begin{align*}
\Phi(x)
&
=\sum_{\ell=1}^{2N}\alpha_{\ell} \,e_{\lambda}^{\theta_{\ell}x}
=\sum_{\ell=1}^{2N}\frac{\Delta_{\ell}(\lambda,\varphi)}{\Delta(\lambda)} \,e_{\lambda}^{\theta_{\ell}x}
\\
&
=\frac{1}{\Delta(\lambda)}\left[\sum_{k=0}^{N-1} \lambda^{-\frac{k}{2N}}
\left(\sum_{\ell=1}^{2N}\Delta_{k\ell}^-(\lambda)\,e_{\lambda}^{\theta_{\ell}x}\right)\varphi^{(k)}(a)
+\sum_{k=0}^{N-1} \lambda^{-\frac{k}{2N}}
\left(\sum_{\ell=1}^{2N}\Delta_{k\ell}^+(\lambda)\,e_{\lambda}^{\theta_{\ell}x}\right)\varphi^{(k)}(b)
\right]
\\
&
=\sum_{k=0}^{N-1} \lambda^{-\frac{k}{2N}}\frac{\Delta_k^-(\lambda;x)}{\Delta(\lambda)}\,\varphi^{(k)}(a)
+\sum_{k=0}^{N-1} \lambda^{-\frac{k}{2N}}\frac{\Delta_k^+(\lambda;x)}{\Delta(\lambda)}\,\varphi^{(k)}(b)
\end{align*}
with
\begin{equation}\label{delta}
\Delta_k^-(\lambda;x)
=\sum_{\ell=1}^{2N}\Delta_{k\ell}^-(\lambda)\,e_{\lambda}^{\theta_{\ell}x},\quad
\Delta_k^+(\lambda;x)
=\sum_{\ell=1}^{2N}\Delta_{k\ell}^+(\lambda)\,e_{\lambda}^{\theta_{\ell}x}.
\end{equation}
We immediately see that equalities~(\ref{delta}) are the expansions of
the determinants displayed in Theorem~\ref{theo-Phi} with respect to their
$(k-1)$th raw and $(k+N-1)$th raw respectively. Formula~(\ref{exp-Phi}) is proved.

Finally, it is easy to check the boundary value problems satisfied
by the functions $x\mapsto \Delta_k^{\pm}(\lambda;x)$ by using
elementary rules on differentiating a determinant.
In particular for, e.g., $\Delta_k^-$, the determinants defining
$(\Delta_k^-)^{(\ell)}(\lambda;a)$, $\ell\in\{0,\dots,N-1\}\backslash\{k\}$,
and $(\Delta_k^-)^{(\ell)}(\lambda;b)$, $\ell\in\{0,\dots,N-1\}$,
have two identical rows, thus they vanish.
The determinant $(\Delta_k^-)^{(k)}(\lambda;a)$ has the same rows as
$\Delta(\lambda)$ up to the multiplicative factor $\lambda^{k/(2N)}$
for its $k$th row, then it coincides with $\lambda^{k/(2N)}\Delta(\lambda)$.
The proof of Theorem~\ref{theo-Phi} is finished.
\end{proof}
%
Now, by eliminating the function $\varphi$ in (\ref{exp-Phi}),
we get the following result which should be understood in the sense of
Schwartz distributions:
\begin{align}
\lqn{\mathbb{E}_x\!\left(\mathrm{e}^{-\lambda\tau_{ab}}\,\ind_{\{\tau_{ab}<+\infty\}},
X_{\tau_{ab}}\in\mathrm{d}z\right)\!/\mathrm{d}z
}
=\sum_{k=0}^{N-1} (-1)^k\lambda^{-\frac{k}{2N}}
\frac{\Delta_k^-(\lambda;x)}{\Delta(\lambda)}\,\delta_a^{(k)}(z)
+\sum_{k=0}^{N-1} (-1)^k\lambda^{-\frac{k}{2N}}
\frac{\Delta_k^+(\lambda;x)}{\Delta(\lambda)}\,\delta_b^{(k)}(z)
\label{exp-joint-dist-inter}
\end{align}
from which we derive the following representation for
the pseudo-distribution of $\left(\tau_{ab},X_{\tau_{ab}}\right)$.
%
\begin{theorem}\label{theo-joint-dist-Xtau-ab}
The joint pseudo-distribution of $\left(\tau_{ab},X_{\tau_{ab}}\right)$
admits the following representation:
\begin{align}
\mathbb{P}_{\!x}\{\tau_{ab}\in\mathrm{d}t,X_{\tau_{ab}}\in\mathrm{d}z\}
/\mathrm{d}t\,\mathrm{d}z
&
=\sum_{k=0}^{N-1} (-1)^k I_k^-(t;x)\,\delta_a^{(k)}(z)
+\sum_{k=0}^{N-1} (-1)^k I_k^+(t;x)\,\delta_b^{(k)}(z)
\label{exp-joint-dist}
\end{align}
where the functions $I_k^{\pm}(t;x)$, $0\le k\le N-1$, are characterized by
their Laplace transforms:
\[
\int_0^{\infty} I_k^{\pm}(t;x) \,\mathrm{e}^{-\lambda t}\,\mathrm{d}t =
\lambda^{-\frac{k}{2N}}\frac{\Delta_k^{\pm}(\lambda;x)}{\Delta(\lambda)}.
\]
They are also characterized by the boundary value problems
\[
\left\{\!\!\begin{array}{l}
\displaystyle\frac{\partial I_k^-}{\partial t}(t;x)=\kappa_{_{\!N}}
\frac{\partial^{2N}\! I_k^-}{\partial^{2N}x}(t;x)
\\[2ex]
\displaystyle\frac{\partial^{\ell} I_k^-}{\partial^{\ell}x}(t;a)=\delta_{k\ell},
\;\frac{\partial^{\ell} I_k^-}{\partial^{\ell}x}(t;b)=0
\quad\text{for }\ell\in\{0,1,\dots,N-1\},
\end{array}\right.
\]
\[
\left\{\!\!\begin{array}{l}
\displaystyle\frac{\partial I_k^+}{\partial t}(t;x)=\kappa_{_{\!N}}
\frac{\partial^{2N}\! I_k^+}{\partial^{2N}x}(t;x)
\\[2ex]
\displaystyle\frac{\partial^{\ell} I_k^+}{\partial^{\ell}x}(t;a)=0,
\;\frac{\partial^{\ell} I_k^+}{\partial^{\ell}x}(t;b)=\delta_{k\ell}
\quad\text{for }\ell\in\{0,1,\dots,N-1\}.
\end{array}\right.
\]
\end{theorem}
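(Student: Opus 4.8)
The plan is to obtain the joint pseudo-distribution of $(\tau_{ab},X_{\tau_{ab}})$ by inverting the Laplace transform in $\lambda$ of the Feynman-Kac functional provided by Theorem~\ref{theo-Phi}. First I would observe that choosing $\varphi(x)=\mathrm{e}^{\mathrm{i}\mu x}$ in~(\ref{exp-Phi}) (so that $\varphi^{(k)}(a)=(\mathrm{i}\mu)^k\mathrm{e}^{\mathrm{i}\mu a}$ and similarly at $b$) yields the Laplace-Fourier transform of $(\tau_{ab},X_{\tau_{ab}})$; but since the formula in Theorem~\ref{theo-Phi} is valid for an \emph{arbitrary} $(N-1)$ times differentiable $\varphi$ and is linear in the data $\bigl(\varphi^{(k)}(a),\varphi^{(k)}(b)\bigr)_{0\le k\le N-1}$, I can directly read off the pseudo-distribution of $X_{\tau_{ab}}$ at fixed $\lambda$ by matching coefficients: writing $\mathbb{E}_x(\mathrm{e}^{-\lambda\tau_{ab}}\varphi(X_{\tau_{ab}})\ind_{\{\tau_{ab}<+\infty\}})=\int \varphi(z)\,\mu_\lambda(x,\mathrm{d}z)$ and using $\langle\delta_a^{(k)},\varphi\rangle=(-1)^k\varphi^{(k)}(a)$, equation~(\ref{exp-Phi}) forces $\mu_\lambda(x,\mathrm{d}z)/\mathrm{d}z$ to be exactly the right-hand side of~(\ref{exp-joint-dist-inter}). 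This is the content of the displayed formula preceding the theorem, and it holds in the sense of Schwartz distributions precisely because the only functionals of $\varphi$ appearing are $\varphi$ and its derivatives up to order $N-1$ evaluated at the two endpoints.

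Next I would invert the Laplace transform. Define $I_k^{\pm}(t;x)$ by the prescription
\[
\int_0^{\infty} I_k^{\pm}(t;x)\,\mathrm{e}^{-\lambda t}\,\mathrm{d}t
=\lambda^{-\frac{k}{2N}}\,\frac{\Delta_k^{\pm}(\lambda;x)}{\Delta(\lambda)},
\]
so that~(\ref{exp-joint-dist}) is the term-by-term Laplace inversion of~(\ref{exp-joint-dist-inter}). Because $\mathrm{e}^{-\lambda\tau_{ab}}$ appears linearly and the $\delta_a^{(k)},\delta_b^{(k)}$ do not depend on $\lambda$ or $t$, inverting in $\lambda$ commutes with the distributional pairing in $z$, which is what makes the passage from~(\ref{exp-joint-dist-inter}) to~(\ref{exp-joint-dist}) legitimate. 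The identification of $I_k^{\pm}(t;x)$ with $\mathbb{P}_x\{\tau_{ab}\in\mathrm{d}t,X_{\tau_{ab}}\in\mathrm{d}z\}/\mathrm{d}t\,\mathrm{d}z$ coefficients then follows from uniqueness of the Laplace transform.

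It then remains to verify that the $I_k^{\pm}$ solve the stated heat-type boundary value problems. The cleanest route is to transfer the boundary value problems satisfied by $x\mapsto\Delta_k^{\pm}(\lambda;x)$, already established in Theorem~\ref{theo-Phi}, through the Laplace transform. Indeed $(\Delta_k^-)^{(2N)}(\lambda;x)=\kappa_{_{\!N}}\lambda\,\Delta_k^-(\lambda;x)$ translates, after multiplying by $\lambda^{-k/(2N)}/\Delta(\lambda)$ and using that $\lambda\mapsto\lambda\,\widehat{f}(\lambda)$ corresponds to $\partial_t f$ (the boundary term vanishing since $I_k^{\pm}(0^+;x)=0$ for $x\in(a,b)$, as there is no mass at $t=0$ when starting strictly inside the interval), into $\partial_t I_k^- = \kappa_{_{\!N}}\,\partial_x^{2N} I_k^-$; and the boundary conditions $(\Delta_k^-)^{(\ell)}(\lambda;a)=\delta_{k\ell}\lambda^{\ell/(2N)}\Delta(\lambda)$, $(\Delta_k^-)^{(\ell)}(\lambda;b)=0$ become, after the same normalization, $\widehat{\partial_x^{\ell}I_k^-}(\lambda;a)=\delta_{k\ell}/\lambda$ and $\widehat{\partial_x^{\ell}I_k^-}(\lambda;b)=0$, i.e. $\partial_x^{\ell}I_k^-(t;a)=\delta_{k\ell}$ and $\partial_x^{\ell}I_k^-(t;b)=0$, by inverting the transform of the constant function. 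The argument for $I_k^+$ is symmetric. I expect the main obstacle to be justifying the interchange of Laplace inversion with the distributional pairing in $z$ and with $x$-differentiation up to order $2N$ — this needs the analyticity of $\lambda\mapsto\Delta_k^{\pm}(\lambda;x)/\Delta(\lambda)$ away from the zeros of $\Delta(\lambda)$, together with adequate decay along vertical lines to apply the Bromwich inversion, and a remark that, in the pseudo-process setting, all these identities are ultimately verified at the level of the finite-dimensional pseudo-distributions~(\ref{chain-rule}) and their dyadic limits rather than through classical probabilistic regularity.
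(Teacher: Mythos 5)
Your proposal is correct and follows the paper's own route: read off the pseudo-density of $X_{\tau_{ab}}$ from~(\ref{exp-Phi}) by linearity in the boundary data $\varphi^{(k)}(a),\varphi^{(k)}(b)$ to get~(\ref{exp-joint-dist-inter}), define $I_k^{\pm}$ by Laplace inversion, and transfer the boundary value problems from the $\Delta_k^{\pm}$ of Theorem~\ref{theo-Phi} through the transform. The only minor difference is that where you justify $I_k^{\pm}(0^+;x)=0$ by a probabilistic heuristic (``no mass at $t=0$''), the paper derives this, together with the boundedness of $I_k^{\pm}(t;x)$ as $t\to+\infty$, from the asymptotics of $\Delta_k^{\pm}(\lambda;x)/\Delta(\lambda)$ as $\lambda\to+\infty$ and $\lambda\to 0^+$ established in the appendix (see~(\ref{limits2}) and~(\ref{asymptotics})).
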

%
The boundary value problems satisfied by the functions $I_k^{\pm}$,
$0\le k\le N-1$, come from those satisfied by the functions $\Delta_k^{\pm}$
displayed in Theorem~\ref{theo-Phi}. The only details we have to check are that
$I_k^{\pm}(t;x)$ goes to 0 as $t$ tends to $0^+$ and that
$I_k^{\pm}(t;x)$ is bounded as $t$ tends to $+\infty$
(in order to have
$
\int_0^{\infty} (\partial/\partial t) I_k^{\pm}(t;x)
\,\mathrm{e}^{-\lambda t}\,\mathrm{d}t
=\lambda\int_0^{\infty} I_k^{\pm}(t;x)\,\mathrm{e}^{-\lambda t}\,\mathrm{d}t
$)
which can be deduced from the fact that their Laplace transforms go to 0
exponentially quickly as $\lambda$ goes to $+\infty$
and are bounded as $\lambda$ goes to $0^+$. These facts are proved
in Appendix~\ref{appendix1}; see~(\ref{limits2}) and (\ref{asymptotics}).
%
\begin{remark}\label{remark-symmetry}\rm
The functions $I_k^{\pm}$, $0\le k\le N-1$, are real-valued.
Indeed, observing that the complex numbers $\theta_{\ell}$, $1\le\ell\le 2N$, are
conjugate two by two, it is easily seen that the determinants contain
conjugate columns two by two, so they are real numbers. More precisely,
conjugating $\theta_1,\dots,\theta_N,\theta_{N+1},
\dots,\theta_{2N}$ respectively yields $\theta_N,\dots,\theta_1,
\theta_{2N},\dots,\theta_{N+1}$. Therefore, conjugating the determinants
$\Delta$ and $\Delta_{k\ell}^{\pm}$ boils down to interchanging their $1$st and
$N$th columns, their $2$nd and $(N-1)$th columns, $\dots$, their $(N+1)$th
and $(2N)$th columns, their $(N+2)$th and $(2N-1)$th columns, and so on.
In this way, we perform an even number of transpositions and we retrieve
the original determinants: $\overline{\Delta}=\Delta$ and
$\overline{\Delta_{k\ell}^{\pm}}=\Delta_{k\ell}^{\pm}$, proving that they
are real numbers.

Moreover, the functions $I_k^+$ and $I_k^-$ are related according to
the identity $I_k^+(t;x)=(-1)^k$ $I_k^-(t;a+b-x)$ as it can be seen by proving
the same identity concerning their Laplace transforms; see~(\ref{symmetry})
in Appendix~\ref{appendix1}.

\end{remark}
%
\begin{remark}\label{remark-limit}\rm
Let us compute the limit of (\ref{exp-joint-dist-inter}) as $b$ tends towards
$+\infty$. To this aim, we find that
\begin{equation}\label{limits}
\frac{\Delta_k^-(\lambda;x)}{\Delta(\lambda)}\underset{b\to+\infty}{\longrightarrow}
\sum_{\ell=1}^{N} \alpha_{k\ell}\,\mathrm{e}_{\lambda}^{\theta_{\ell}\,(x-a)}
\quad\text{and}\quad
\frac{\Delta_k^+(\lambda;x)}{\Delta(\lambda)}\underset{b\to+\infty}{\longrightarrow}0
\end{equation}
with
\[
\alpha_{k\ell}=\frac{1}{\det(V)} \begin{vmatrix}
\,1              & \cdots & 1                     & 0      & 1                     & \cdots & 1
\\[.5ex]
\,\theta_1       & \cdots & \theta_{\ell-1}       & 0      & \theta_{\ell+1}       & \cdots & \theta_N
\\[.5ex]
\,\vdots         &        &\vdots                 & \vdots & \vdots                &        & \vdots
\\[.5ex]
\,\theta_1^{k-1} & \cdots & \theta_{\ell-1}^{k-1} & 0      & \theta_{\ell+1}^{k-1} & \cdots & \theta_N^{k-1}
\\[.5ex]
\,0              & \cdots & 0                     & 1      & 0                     & \cdots & 0
\\[.5ex]
\,\theta_1^{k+1} & \cdots & \theta_{\ell-1}^{k+1} & 0      & \theta_{\ell+1}^{k+1} & \cdots & \theta_N^{k+1}
\\[.5ex]
\,\vdots         &        & \vdots                & \vdots & \vdots                &        & \vdots
\\[.5ex]
\,\theta_1^{N-1} & \cdots & \theta_{\ell-1}^{N-1} & 0      & \theta_{\ell+1}^{N-1} & \cdots & \theta_N^{N-1}
\end{vmatrix}\!.
\]
The coefficients $\alpha_{k\ell}$ are characterized by the identity
\begin{equation}\label{pol-alpha}
\sum_{k=0}^{N-1} \alpha_{k\ell}\,x^k=
\frac{1}{\det(V)} \begin{vmatrix}
\,1              & \cdots & 1                     & 1       & 1                     & \cdots & 1
\\[.5ex]
\,\theta_1       & \cdots & \theta_{\ell-1}       & x       & \theta_{\ell+1}       & \cdots & \theta_N
\\[.5ex]
\,\vdots         &        &\vdots                 & \vdots  & \vdots                &        & \vdots
\\[.5ex]
\,\theta_1^{N-1} & \cdots & \theta_{\ell-1}^{N-1} & x^{N-1} & \theta_{\ell+1}^{N-1} & \cdots & \theta_N^{N-1}
\end{vmatrix}
=\prod_{1\le k\le N \atop k\neq \ell} \left(\frac{x-\theta_k}
{\theta_{\ell}-\theta_k}\right)
\end{equation}
as it is easily seen by appealing to the well-known Vandermonde
determinant $\det(V)=\prod_{1\le i<j\le N}$ $(\theta_j-\theta_i)$.
Notice that polynomial~(\ref{pol-alpha}) is nothing but an elementary
Lagrange interpolating polynomial related to the numbers $\theta_i$, $1\le i\le N$.
The details of these limiting results being cumbersome, we postpone them
to Appendix~\ref{appendix1}.

In regards to (\ref{exp-joint-dist-inter}), (\ref{exp-joint-dist}) and
(\ref{limits}), we conclude that, for $x>a$,
\[
\lim_{b\to +\infty}\mathbb{P}_{\!x}\{\tau_{ab}\in\mathrm{d}t,
X_{\tau_{ab}}\in\mathrm{d}z\}/\mathrm{d}t\,\mathrm{d}z
=\sum_{k=0}^{N-1} (-1)^k K_k(t;x)\,\delta_a^{(k)}(z)
\]
where $K_k$ is the function whose Laplace transform is given by
\[
\int_0^{\infty} K_k(t;x) \,\mathrm{e}^{-\lambda t}\,\mathrm{d}t
=\lambda^{-\frac{k}{2N}}\sum_{\ell=1}^{N} \alpha_{k\ell}
\,\mathrm{e}^{\theta_{\ell}\sqrt[2N]{\lambda}\,(x-a)}.
\]
We retrieve at the limit the pseudo-distribution of $(\tau_a,X_{\tau_a})$
related to the first overshooting time of level $a$ displayed in~\cite{la3},
formula~(5.15).
\end{remark}
%

\section{Pseudo-distribution of $\tau_{ab}$}

By applying the Schwartz distribution~(\ref{exp-joint-dist}) to the test
function~$1$, we immediately extract the pseudo-distribution of $\tau_{ab}$:
$\mathbb{P}_{\!x}\{\tau_{ab}\in\mathrm{d}t\}/\mathrm{d}t=I_0^-(t;x)+I_0^+(t;x)$
that we state as follows.
%
\begin{theorem}\label{theo-dist-tau-ab}
The pseudo-distribution of $\tau_{ab}$ is given either by one of
both formulae below:
\[
\mathbb{P}_{\!x}\{\tau_{ab}\in\mathrm{d}t\}/\mathrm{d}t=I(t;x),
\quad
\mathbb{P}_{\!x}\{\tau_{ab}\le t\}=J(t;x) 
\]
with
\[
\int_0^{\infty} I(t;x) \,\mathrm{e}^{-\lambda t}\,\mathrm{d}t =
\frac{\Delta_0^+(\lambda;x)+\Delta_0^-(\lambda;x)}{\Delta(\lambda)},
\quad
\int_0^{\infty} J(t;x) \,\mathrm{e}^{-\lambda t}\,\mathrm{d}t =
\frac{1}{\lambda} \,\frac{\Delta_0^+(\lambda;x)+\Delta_0^-(\lambda;x)}{\Delta(\lambda)}.
\]
\end{theorem}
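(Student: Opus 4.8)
The plan is to read off both formulae directly from the joint pseudo-distribution of Theorem~\ref{theo-joint-dist-Xtau-ab} by specializing the Schwartz-distributional identity~(\ref{exp-joint-dist}) to the test function $\varphi\equiv 1$. First I would pair both sides of~(\ref{exp-joint-dist}) with the constant function~$1$. Since $\langle\delta_a^{(k)},1\rangle=(-1)^k\,1^{(k)}(a)$ vanishes for every $k\ge 1$ and equals $1$ for $k=0$, and likewise at~$b$, all the terms with $k\ge 1$ drop out, leaving $\mathbb{P}_{\!x}\{\tau_{ab}\in\mathrm{d}t\}/\mathrm{d}t=I_0^-(t;x)+I_0^+(t;x)=:I(t;x)$.

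Next I would compute the Laplace transform of $I$. By the characterization of the functions $I_k^{\pm}$ through their Laplace transforms (Theorem~\ref{theo-joint-dist-Xtau-ab} in the case $k=0$, for which $\lambda^{-k/(2N)}=1$), one has $\int_0^{\infty} I_0^{\pm}(t;x)\,\mathrm{e}^{-\lambda t}\,\mathrm{d}t=\Delta_0^{\pm}(\lambda;x)/\Delta(\lambda)$, and adding the two contributions gives the first announced formula for $\int_0^{\infty} I(t;x)\,\mathrm{e}^{-\lambda t}\,\mathrm{d}t$. For the distribution function I would set $J(t;x)=\mathbb{P}_{\!x}\{\tau_{ab}\le t\}=\int_0^t I(s;x)\,\mathrm{d}s$ and invoke the elementary rule that the Laplace transform of a primitive equals the Laplace transform of the integrand divided by~$\lambda$; this yields the stated expression for $\int_0^{\infty} J(t;x)\,\mathrm{e}^{-\lambda t}\,\mathrm{d}t$.

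The one point requiring a word of care — and the main (mild) obstacle — is the legitimacy of pairing~(\ref{exp-joint-dist}), an identity between Schwartz distributions in the variable $z$, against the non-compactly-supported test function~$1$, together with the Fubini-type interchange of the $z$-pairing with the $\mathrm{d}t$-integration. This is harmless here because the right-hand side of~(\ref{exp-joint-dist}) is supported at the two points $a$ and $b$, so only the local behaviour of the test function near $a$ and $b$ matters, and one may freely replace $1$ by any smooth compactly supported function that equals $1$ on a neighbourhood of $[a,b]$. Finally, the integrability in $t$ and the validity of the Laplace manipulations (including the primitive formula for $J$) follow from the estimates on the $I_0^{\pm}$ recalled right after Theorem~\ref{theo-joint-dist-Xtau-ab} — namely that $I_0^{\pm}(t;x)\to 0$ as $t\to 0^+$ and stays bounded as $t\to+\infty$ — which are established in Appendix~\ref{appendix1}.
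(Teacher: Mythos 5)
Your proposal is correct and follows exactly the paper's own route: the theorem is obtained by applying the distributional identity~(\ref{exp-joint-dist}) to the test function~$1$, so that only the $k=0$ terms survive and $I=I_0^-+I_0^+$, with the Laplace transforms read off from Theorem~\ref{theo-joint-dist-Xtau-ab} and the factor $1/\lambda$ for $J$ coming from the primitive rule. Your additional remarks on the legitimacy of pairing with the non-compactly-supported function $1$ and on the integrability estimates from Appendix~\ref{appendix1} are sound and, if anything, slightly more careful than the paper's one-line derivation.
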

%
Let us introduce the up-to-date minimum and maximum functionals of $X$:
\[
m_t=\min_{s\in[0,t]}X_s,\quad M_t=\max_{s\in[0,t]}X_s.
\]
It is plain that the functionals $m_t,M_t$ and time $\tau_{ab}$ are related
according as $a<m_t\le M_t<b\Longleftrightarrow \tau_{ab}>t$. Then
$\mathbb{P}_{\!x}\{a<m_t\le M_t<b\}=1-\mathbb{P}_{\!x}\{\tau_{ab}\le t\}.$
%
\begin{corollary}
The joint pseudo-distribution of $(m_t,M_t)$ is given by
\[
\mathbb{P}_{\!x}\{a<m_t\le M_t<b\}=1-J(t;x)
\]
and its Laplace transform with respect to $t$ writes
\[
\int_0^{\infty} \mathbb{P}_{\!x}\{a<m_t\le M_t<b\}\,\mathrm{e}^{-\lambda t}
\,\mathrm{d}t =\frac{1}{\lambda}\,
\frac{\Delta(\lambda)-\Delta_0^+(\lambda;x)-\Delta_0^-(\lambda;x)}{\Delta(\lambda)}.
\]
\end{corollary}
%

\section{Pseudo-distribution of $X_{\tau_{ab}}$}

In this part, we focus on the exit location of $X$ at time $\tau_{ab}$
whose pseudo-distribution admits a remarkable expression by means of Hermite
interpolating polynomials whose expressions are displayed in the introduction.
%
\begin{theorem}\label{theo-dist-Xab}
The pseudo-distribution of the exit location $X_{\tau_{ab}}
\ind_{\{\tau_{ab}<+\infty\}}$ is given, in the sense of Schwartz distributions, by
\begin{equation}\label{exp-location-dist}
\mathbb{P}_{\!x}\{X_{\tau_{ab}}\in \mathrm{d} z,\tau_{ab}<+\infty\}/\mathrm{d} z
=\sum_{k=0}^{N-1} (-1)^k H_k^-(x)\,\delta_a^{(k)}(z)
+\sum_{k=0}^{N-1} (-1)^k H_k^+(x)\,\delta_b^{(k)}(z).
\end{equation}
\end{theorem}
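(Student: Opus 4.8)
The plan is to obtain the pseudo-distribution of $X_{\tau_{ab}}$ by letting $\lambda\to 0^+$ in the Laplace-transformed identity~(\ref{exp-joint-dist-inter}). Integrating~(\ref{exp-joint-dist}) over $t\in(0,+\infty)$ amounts to evaluating the Laplace transforms at $\lambda=0$, so I first set
\[
H_k^{\pm}(x)=\lim_{\lambda\to 0^+}\lambda^{-\frac{k}{2N}}\,\frac{\Delta_k^{\pm}(\lambda;x)}{\Delta(\lambda)}
\]
(the existence of these limits being one of the facts established in Appendix~\ref{appendix1}, cf.~(\ref{asymptotics})), and the claim reduces to identifying $H_k^{\pm}(x)$ with the Hermite interpolating polynomials of degree $2N-1$ described in the introduction. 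Rather than computing the limit of the explicit determinants directly — which is delicate because $\Delta(\lambda)$ and $\Delta_k^{\pm}(\lambda;x)$ each vanish to some order as $\lambda\to 0^+$ — I would pass to the limit in the boundary value problems of Theorem~\ref{theo-Phi}.

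The key step is this: divide the BVP satisfied by $x\mapsto\Delta_k^-(\lambda;x)$ by $\lambda^{k/(2N)}\Delta(\lambda)$ and set $g_k^-(\lambda;x)=\lambda^{-k/(2N)}\Delta_k^-(\lambda;x)/\Delta(\lambda)$. Then $g_k^-$ solves
\[
\kappa_{_{\!N}}(g_k^-)^{(2N)}(\lambda;x)=\lambda\,g_k^-(\lambda;x),\qquad
(g_k^-)^{(\ell)}(\lambda;a)=\delta_{k\ell},\quad (g_k^-)^{(\ell)}(\lambda;b)=0,\quad 0\le\ell\le N-1,
\]
and similarly for $g_k^+$ with the roles of $a$ and $b$ exchanged. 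Letting $\lambda\to 0^+$, the right-hand side of the differential equation vanishes, so the limit $H_k^-$ (assuming it exists and the convergence is strong enough in $\mathcal C^{2N}$, which one checks from the explicit exponential-sum form on the compact interval $[a,b]$) satisfies $(H_k^-)^{(2N)}=0$ on $(a,b)$, hence is a polynomial of degree at most $2N-1$, together with the $2N$ interpolation conditions $(H_k^-)^{(\ell)}(a)=\delta_{k\ell}$, $(H_k^-)^{(\ell)}(b)=0$ for $0\le\ell\le N-1$. These $2N$ conditions determine a polynomial of degree $\le 2N-1$ uniquely (the Hermite interpolation problem with two nodes of multiplicity $N$ is unimodally solvable), so $H_k^-$ coincides with the classical Hermite polynomial $H_k^-$ of the introduction; likewise $H_k^+$. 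Finally I would verify that the closed-form expressions stated in the introduction do satisfy these interpolation conditions — a routine check using $\big(\tfrac{b-x}{b-a}\big)^N$ killing the first $N-1$ derivatives at $b$, the factor $(x-a)^k/k!$ producing $\delta_{k\ell}$ among the first $k$ derivatives at $a$, and the binomial sum $\sum_{\ell=0}^{N-k-1}\binom{\ell+N-1}{\ell}\big(\tfrac{x-a}{b-a}\big)^\ell$ being precisely the truncated Taylor expansion of $(1-\tfrac{x-a}{b-a})^{-N}=(\tfrac{b-a}{b-x})^N$ that cancels the remaining derivatives at $a$ up to order $N-1$ — and then read off~(\ref{exp-location-dist}) from~(\ref{exp-joint-dist-inter}) with $\lambda=0$.

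\textbf{Main obstacle.} The delicate point is the justification of the interchange of limits — that $\lambda^{-k/(2N)}\Delta_k^{\pm}(\lambda;x)/\Delta(\lambda)$ really has a finite limit as $\lambda\to 0^+$ and that this limit inherits the boundary conditions and the degenerate equation. A priori both numerator and denominator are small, and one must control the order of vanishing; this is exactly what the asymptotic analysis deferred to Appendix~\ref{appendix1} supplies, via a careful expansion of $e_\lambda^{\theta_\ell x}=\sum_{j\ge 0}\theta_\ell^j\lambda^{j/(2N)}x^j/j!$ and row/column operations on the Vandermonde-like determinants showing that the leading contributions organize into Lagrange/Hermite interpolation data. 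Given that appendix, the argument above is essentially a uniqueness statement for a Hermite interpolation problem, which is the clean and short part.
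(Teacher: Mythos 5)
Your strategy is genuinely different from the paper's, and it is worth contrasting the two. The paper does not take any limit in $\lambda$: it simply writes down boundary value problem~(\ref{BVP1}) with $\lambda=0$ for $\Psi(x)=\mathbb{E}_x\!\left(\varphi(X_{\tau_{ab}})\,\ind_{\{\tau_{ab}<+\infty\}}\right)$, namely $\Psi^{(2N)}=0$ on $(a,b)$ with $\Psi^{(k)}(a)=\varphi^{(k)}(a)$, $\Psi^{(k)}(b)=\varphi^{(k)}(b)$ for $0\le k\le N-1$, and observes that the unique solution is the Hermite interpolating polynomial $\sum_k H_k^-(x)\varphi^{(k)}(a)+\sum_k H_k^+(x)\varphi^{(k)}(b)$; formula~(\ref{exp-location-dist}) is then just this identity rewritten distributionally. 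Your endgame (uniqueness of the two-node Hermite interpolation problem, plus verification that the closed forms in the introduction satisfy the interpolation conditions) is the same, but you reach the degenerate BVP by letting $\lambda\to0^+$ in Theorem~\ref{theo-Phi} rather than by invoking the Heuristic directly at $\lambda=0$. Your route has the merit of deriving Theorem~\ref{theo-dist-Xab} from Theorem~\ref{theo-Phi} rather than from a second application of the heuristic; the paper's route is shorter and avoids any limit interchange.

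The price of your route is real, and you have not fully paid it. First, the appendix does not establish what you attribute to it: estimate~(\ref{asymptotics}) only gives $\Delta_k^-(\lambda;x)/\Delta(\lambda)=O(\lambda^{k/(2N)})$ as $\lambda\to0^+$, i.e.\ boundedness of $g_k^-(\lambda;x)=\lambda^{-k/(2N)}\Delta_k^-(\lambda;x)/\Delta(\lambda)$, not existence of its limit, let alone convergence in $\mathcal{C}^{2N}([a,b])$. Second, the claim that $\mathcal{C}^{2N}$ convergence "is checked from the explicit exponential-sum form" is not routine: as $\lambda\to0^+$ all the basis functions $e_\lambda^{\theta_\ell x}$ collapse to the constant $1$, the linear system~(\ref{linear-system}) degenerates, and the coefficients $\alpha_\ell$ blow up; the limit being a polynomial is a cancellation phenomenon, so you would need the full appendix-style expansion (powers of $\lambda^{1/(2N)}$, Vandermonde factorizations) to control all $2N$ derivatives, essentially redoing the paper's asymptotic analysis at higher order. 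Third, identifying $\mathbb{P}_{\!x}\{X_{\tau_{ab}}\in\mathrm{d}z,\tau_{ab}<+\infty\}$ with the $\lambda\to0^+$ limit of~(\ref{exp-joint-dist-inter}) requires $\int_0^\infty I_k^\pm(t;x)\,\mathrm{d}t=\lim_{\lambda\to0^+}\int_0^\infty I_k^\pm(t;x)\,\mathrm{e}^{-\lambda t}\,\mathrm{d}t$, which for the \emph{signed} integrands $I_k^\pm$ is an Abelian statement needing separate justification. None of these gaps is fatal, but as written your proof is conditional on facts the appendix does not supply, whereas the paper's two-line argument sidesteps them entirely.
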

%
\begin{proof}
We directly solve boundary value problem~(\ref{BVP1}) in the case where
$\lambda=0$ therein. Namely, by setting
$\Psi(x)=\mathbb{E}_x\!\left(\varphi(X_{\tau_{ab}})\,\ind_{\{\tau_{ab}<+\infty\}}\right)$,
\[
\left\{\!\!\begin{array}{l}
\Psi^{(2N)}(x)=0,\quad x\in(a,b),
\\[1ex]
\Psi^{(k)}(a)=\varphi^{(k)}(a)\text{ and }
\Psi^{(k)}(b)=\varphi^{(k)}(b) \quad \text{for } k\in\{0,1,\dots,N-1\}.
\end{array}\right.
\]
It is clear that $\Psi$ is the polynomial of degree not greater than $(2N-1)$
whose derivatives at $a$ and $b$ up to order $(N-1)$ are
the given numbers $\varphi^{(k)}(a)$ and $\varphi^{(k)}(b)$, $0\le k\le N-1$.
It can be written as a linear combination of the Hermite interpolating
fundamental polynomials $H_k^{\pm}$, $0\le k\le N-1$, displayed in
Theorem~\ref{theo-dist-Xab} as follows: for any test functions $\varphi$,
\begin{equation}\label{BVP2}
\mathbb{E}_x\!\left(\varphi(X_{\tau_{ab}})\,\ind_{\{\tau_{ab}<+\infty\}}\right)
=\sum_{k=0}^{N-1} H_k^-(x) \,\varphi^{(k)}(a)
+\sum_{k=0}^{N-1} H_k^+(x) \,\varphi^{(k)}(b).
\end{equation}
Formula~(\ref{exp-location-dist}) is nothing but (\ref{BVP2}) rephrased by means of
Schwartz distributions.
\end{proof}
%
\begin{remark}\label{remark-expectation-hermite}\rm
Formula~(\ref{BVP2}) yields for $\varphi=H_k^{\pm}$, $0\le k\le N-1$, that
$\mathbb{E}_x\!\left(H_k^{\pm}(X_{\tau_{ab}})\,\ind_{\{\tau_{ab}<+\infty\}}\right)
=H_k^{\pm}(x)$.
\end{remark}
%
\begin{remark}
By letting $b$ tend to $+\infty$, we see that $H_k^+(x)$ tends to $0$
while $H_k^-(x)$ tends to $(x-a)^k/(k\,!)$. Hence, we find that
\[
\lim_{b\to +\infty}\mathbb{P}_{\!x}\{X_{\tau_{ab}}\in\mathrm{d}z,
\tau_{ab}<+\infty\}/\mathrm{d}z
=\sum_{k=0}^{N-1} \frac{(a-x)^k}{k!} \,\delta_a^{(k)}(z).
\]
We retrieve at the limit the pseudo-distribution~(\ref{dirac1})
of the location $X_{\tau_a}$ of $X$ at the first overshooting time of level $a$,
which is displayed in~\cite{la3}, formula~(5.18).
\end{remark}
%
\begin{corollary}
Time $\tau_{ab}$ is $\mathbb{P}_{\!x}$-almost surely finite in the sense that
\[
\mathbb{P}_{\!x}\{\tau_{ab}<+\infty\}=1.
\]
\end{corollary}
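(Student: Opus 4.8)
The plan is to extract the statement $\mathbb{P}_{\!x}\{\tau_{ab}<+\infty\}=1$ from Theorem~\ref{theo-dist-Xab} by applying the identity~(\ref{BVP2}) to a cleverly chosen test function $\varphi$. The natural choice is $\varphi\equiv 1$, the constant function. First I would observe that for $\varphi=1$ all derivatives $\varphi^{(k)}$ vanish for $k\ge 1$, so the right-hand side of~(\ref{BVP2}) collapses to $H_0^-(x)+H_0^+(x)$, while the left-hand side is exactly $\mathbb{E}_x(\ind_{\{\tau_{ab}<+\infty\}})=\mathbb{P}_{\!x}\{\tau_{ab}<+\infty\}$.

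The remaining step is then purely algebraic: to show that $H_0^-(x)+H_0^+(x)=1$ for all $x\in(a,b)$ (indeed for all $x$). There are two ways to see this. The conceptual argument is that, by the defining interpolation conditions of the Hermite fundamental polynomials, the polynomial $P(x)=H_0^-(x)+H_0^+(x)$ has degree at most $2N-1$ and satisfies $P(a)=P(b)=1$ together with $P^{(\ell)}(a)=P^{(\ell)}(b)=0$ for $1\le \ell\le N-1$; since the constant polynomial $1$ satisfies the very same $2N$ interpolation conditions and Hermite interpolation at two nodes with $N$ conditions each is unimodular on polynomials of degree $\le 2N-1$, we must have $P\equiv 1$. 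Alternatively, one can substitute the explicit formulas for $H_0^{\pm}$ displayed in the introduction and check directly that
\[
\left(\frac{b-x}{b-a}\right)^{\!\!N}\sum_{\ell=0}^{N-1}\binom{\ell+N-1}{\ell}\!\left(\frac{x-a}{b-a}\right)^{\!\!\ell}
+\left(\frac{x-a}{b-a}\right)^{\!\!N}\sum_{\ell=0}^{N-1}\binom{\ell+N-1}{\ell}\!\left(\frac{b-x}{b-a}\right)^{\!\!\ell}=1,
\]
which, writing $u=(x-a)/(b-a)$ so that $1-u=(b-x)/(b-a)$, is the classical identity $(1-u)^N\sum_{\ell=0}^{N-1}\binom{\ell+N-1}{\ell}u^{\ell}+u^N\sum_{\ell=0}^{N-1}\binom{\ell+N-1}{\ell}(1-u)^{\ell}=1$; this in turn follows from the negative-binomial expansion $(1-u)^{-N}=\sum_{\ell\ge 0}\binom{\ell+N-1}{\ell}u^{\ell}$ by splitting the series at $\ell=N$ and regrouping.

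I expect no genuine obstacle here: the result is a one-line corollary once~(\ref{BVP2}) is in hand. The only point requiring a word of care is that~(\ref{BVP2}) was derived with $\varphi$ assumed $(N-1)$ times differentiable (and, for Theorem~\ref{theo-Phi}, bounded), and the constant function trivially meets these requirements, so the substitution is legitimate. I would present the proof in two sentences: invoke~(\ref{BVP2}) with $\varphi\equiv 1$, then cite the polynomial-interpolation identity $H_0^-+H_0^+\equiv 1$ (proved by either of the two arguments above, the conceptual one being the cleanest to record).

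\begin{proof}
Apply identity~(\ref{BVP2}) with the test function $\varphi\equiv 1$. Since $\varphi^{(k)}\equiv 0$ for $1\le k\le N-1$ and $\varphi^{(0)}\equiv 1$, the right-hand side reduces to $H_0^-(x)+H_0^+(x)$, whereas the left-hand side equals $\mathbb{E}_x(\ind_{\{\tau_{ab}<+\infty\}})=\mathbb{P}_{\!x}\{\tau_{ab}<+\infty\}$. It remains to check that $H_0^-(x)+H_0^+(x)=1$. The polynomial $P=H_0^-+H_0^+$ has degree at most $2N-1$ and, by the interpolation conditions defining the Hermite fundamental polynomials, satisfies $P(a)=P(b)=1$ and $P^{(\ell)}(a)=P^{(\ell)}(b)=0$ for $1\le\ell\le N-1$. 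The constant polynomial $1$ fulfils these same $2N$ conditions, and Hermite interpolation at the two nodes $a,b$ with $N$ prescribed derivatives at each uniquely determines a polynomial of degree at most $2N-1$; hence $P\equiv 1$. Consequently $\mathbb{P}_{\!x}\{\tau_{ab}<+\infty\}=1$.
\end{proof}
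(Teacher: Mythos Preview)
Your proof is correct and starts exactly as the paper does: apply~(\ref{BVP2}) with $\varphi\equiv 1$ to reduce the claim to the identity $H_0^-(x)+H_0^+(x)=1$. The difference lies in how this identity is verified. You invoke uniqueness of Hermite interpolation: both $H_0^-+H_0^+$ and the constant $1$ are polynomials of degree at most $2N-1$ matching the same $2N$ interpolation data, hence they coincide. The paper instead performs an explicit computation: it expands $(b-a)^{N-1-\ell}=[(x-a)+(b-x)]^{N-1-\ell}$ inside the formula for $H_0^-$, reindexes, and uses the Vandermonde-type convolution $\sum_{\ell=0}^m\binom{\ell+N-1}{\ell}\binom{N-1-\ell}{m-\ell}=\binom{2N-1}{m}$ to obtain the closed forms
\[
H_0^-(x)=\frac{1}{(b-a)^{2N-1}}\sum_{m=0}^{N-1}\binom{2N-1}{m}(x-a)^m(b-x)^{2N-1-m},\qquad
H_0^+(x)=\frac{1}{(b-a)^{2N-1}}\sum_{m=N}^{2N-1}\binom{2N-1}{m}(x-a)^m(b-x)^{2N-1-m},
\]
whose sum is the full binomial expansion of $[(x-a)+(b-x)]^{2N-1}/(b-a)^{2N-1}=1$. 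Your argument is shorter and cleaner; the paper's computation is longer but produces, as a byproduct, the explicit Bernstein-type representations of the ruin pseudo-probabilities $H_0^{\pm}$ themselves.
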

%
Because of this, in the sequel of the paper, we shall omit the condition
$\tau_{ab}<+\infty$ when considering the pseudo-random variable $X_{\tau_{ab}}$.
Actually, let us recall that, in the framework of signed measures,
if $A$ is a set of $\mathbb{P}_{\!x}$-measure~$1$,
it does \textit{not} entail that for any set $B$ that $\mathbb{P}_{\!x}(A\cap B)=
\mathbb{P}_{\!x}(B)$ contrarily to the case of ordinary probability.

\begin{proof}
The pseudo-probability $\mathbb{P}\{\tau_{ab}<+\infty\}$
can be deduced from~(\ref{exp-location-dist}) by choosing $\varphi=1$.
Indeed, we have that
\begin{align*}
H_0^-(x)
&=\left(\frac{b-x}{b-a}\right)^{\!\!N} \sum_{\ell=0}^{N-1}\binom{\ell+N-1}{\ell}
\!\left(\frac{x-a}{b-a}\right)^{\!\!\ell}
\\
&
=\frac{(b-x)^N}{(b-a)^{2N-1}} \sum_{\ell=0}^{N-1}\binom{\ell+N-1}{\ell} (x-a)^\ell(b-a)^{N-1-\ell}.
\end{align*}
By writing the term $(b-a)^{N-1-\ell}$ as
\[
(b-a)^{N-1-\ell}=[(x-a)+(b-x)]^{N-1-\ell}
=\sum_{k=0}^{N-1-\ell}\binom{N-1-\ell}{k}(x-a)^k(b-x)^{N-1-k-\ell},
\]
it follows that
\begin{align*}
H_0^-(x)
&=\frac{1}{(b-a)^{2N-1}} \sum_{0\le\ell\le N-1 \atop 0\le k\le N-1-\ell}
\binom{N-1-\ell}{k}\!\binom{\ell+N-1}{\ell}  (x-a)^{k+\ell}(b-x)^{2N-1-k-\ell}
\\
&
=\frac{1}{(b-a)^{2N-1}} \sum_{m=0}^{N-1} \left[\,\sum_{\ell=0}^m
\binom{\ell+N-1}{\ell} \!\binom{N-1-\ell}{m-\ell}\!\right] (x-a)^m (b-x)^{2N-1-m}.
\end{align*}
By using the elementary identity
$\sum_{\ell=0}^n \binom{\ell+p}{\ell} \!\binom{n+q-\ell}{n-\ell}=\binom{n+p+q+1}{n}$
which comes from the equality $(1+u)^{-p}(1+u)^{-q}=(1+u)^{-p-q}$
together with the expansion, e.g., for $p$, $(1+u)^{-p}=\sum_{\ell=0}^{\infty}
(-1)^\ell\binom{\ell+p-1}{\ell} u^\ell$, we get that
\[
\sum_{\ell=0}^m \binom{\ell+N-1}{\ell} \!\binom{N-1-\ell}{m-\ell}=\binom{2N-1}{m}.
\]
As a byproduct,
\[
H_0^-(x)=\frac{1}{(b-a)^{2N-1}} \sum_{m=0}^{N-1}\binom{2N-1}{m}
(x-a)^m (b-x)^{2N-1-m}.
\]
Similarly,
\[
H_0^+(x)=\frac{1}{(b-a)^{2N-1}} \sum_{m=N}^{2N-1}\binom{2N-1}{m}
(x-a)^m (b-x)^{2N-1-m}
\]
and we immediately deduce that
\[
\mathbb{P}\{\tau_{ab}<+\infty\}=H_0^-(x)+H_0^+(x)
= \frac{1}{(b-a)^{2N-1}} \sum_{m=0}^{2N-1}\binom{2N-1}{m} (x-a)^m (b-x)^{2N-1-m}=1.
\]
\end{proof}
%

Let us introduce the first down- and up-overshooting times of the
single thresholds $a$ and $b$ for $(X_t)_{t\ge 0}$:
\[
\tau_a^-=\inf\{t\ge 0: X_t<a\},\quad \tau_b^+=\inf\{t\ge 0: X_t>b\}.
\]
The famous problem of the ruin of the gambler in the context of
pseudo-Brownian motion consists in computing the pseudo-probability
of overshooting one level ($a$ or $b$) before the other one. For instance,
we have that
\[
\mathbb{P}_{\!x}\{\tau_a^-<\tau_b^+\}=\mathbb{P}_{\!x}\{X_{\tau_{ab}}\le a\}.
\]
Hence, in view of formula~(\ref{exp-location-dist}), we obtain the following
result.
%
\begin{corollary}\label{cor-ruin}
The ``ruin'' pseudo-probabilities related to pseudo-Brownian motion
are given by
\[
\mathbb{P}_{\!x}\{\tau_a^-<\tau_b^+\}=H_0^-(x),\quad
\mathbb{P}_{\!x}\{\tau_b^+<\tau_a^-\}=H_0^+(x).
\]
\end{corollary}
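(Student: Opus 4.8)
The plan is to obtain the ``ruin'' pseudo-probabilities directly from the already-established formula~(\ref{exp-location-dist}), using the observation that the event $\{\tau_a^-<\tau_b^+\}$ coincides $\mathbb{P}_{\!x}$-almost surely with the event $\{X_{\tau_{ab}}\le a\}$, or, more precisely, with $\{X_{\tau_{ab}}\in(-\infty,a]\}$, since a pseudo-trajectory exits $(a,b)$ through $a$ exactly when it overshoots $a$ before $b$. Symmetrically, $\{\tau_b^+<\tau_a^-\}$ coincides with $\{X_{\tau_{ab}}\ge b\}$. Since $\tau_{ab}$ is almost surely finite (by the preceding corollary), the indicator $\ind_{\{\tau_{ab}<+\infty\}}$ may be dropped.

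First I would write $\mathbb{P}_{\!x}\{\tau_a^-<\tau_b^+\}=\mathbb{P}_{\!x}\{X_{\tau_{ab}}\le a\}=\langle \mu_x,\chi\rangle$, where $\mu_x$ is the Schwartz distribution $\mathbb{P}_{\!x}\{X_{\tau_{ab}}\in\mathrm{d}z\}/\mathrm{d}z$ given by the right-hand side of~(\ref{exp-location-dist}), and $\chi$ is a test function equal to $1$ on a neighbourhood of $a$ and equal to $0$ on a neighbourhood of $b$ (for instance $\chi$ supported in $(-\infty,(a+b)/2)$ and identically $1$ near $a$). Because $\mu_x$ is supported at the two points $a$ and $b$ — it is a finite linear combination of $\delta_a^{(k)}$ and $\delta_b^{(k)}$ — the pairing $\langle\mu_x,\chi\rangle$ depends only on the jets of $\chi$ at $a$ and $b$. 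Applying the definition $\langle\delta_a^{(k)},\chi\rangle=(-1)^k\chi^{(k)}(a)$ and $\langle\delta_b^{(k)},\chi\rangle=(-1)^k\chi^{(k)}(b)$, and using $\chi^{(k)}(a)=\delta_{k0}$, $\chi^{(k)}(b)=0$ for $0\le k\le N-1$, all terms collapse except the $k=0$ term at $a$, yielding $\langle\mu_x,\chi\rangle=H_0^-(x)$. The symmetric choice of test function (equal to $1$ near $b$, $0$ near $a$) gives $\mathbb{P}_{\!x}\{\tau_b^+<\tau_a^-\}=H_0^+(x)$.

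One small point to address explicitly: formula~(\ref{exp-location-dist}) is an identity of Schwartz distributions, so ``$\mathbb{P}_{\!x}\{X_{\tau_{ab}}\le a\}$'' must be interpreted as the pairing of that distribution with a smooth cutoff rather than literally as the mass assigned to the ray $(-\infty,a]$; since the distribution is concentrated at $a$ and $b$, any two admissible cutoffs agree in the pairing, so the result is well-defined and the computation above is legitimate. I would also note, as a consistency check, that adding the two identities recovers $H_0^-(x)+H_0^+(x)=1$, which is precisely the computation carried out in the proof of the previous corollary; this confirms that the two ruin events partition the sample space (up to pseudo-null sets) and that no mass is ``lost'' at the exit.

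There is really no serious obstacle here: the theorem is a direct corollary of Theorem~\ref{theo-dist-Xab}, and the only thing requiring a word of care is the distributional interpretation of the one-sided events, which is handled by the cutoff-function device just described. The explicit closed forms of $H_0^\pm$ displayed in the proof of the finiteness corollary are not even needed for the statement, though they make the identity $H_0^-+H_0^+=1$ transparent.
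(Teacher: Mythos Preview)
Your proposal is correct and follows exactly the paper's approach: the paper simply records the identification $\mathbb{P}_{\!x}\{\tau_a^-<\tau_b^+\}=\mathbb{P}_{\!x}\{X_{\tau_{ab}}\le a\}$ and invokes formula~(\ref{exp-location-dist}) without further comment. Your additional care about interpreting the one-sided event via a smooth cutoff is a welcome elaboration of what the paper leaves implicit.
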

%

In the corollary below, we provide a way for computing the
pseudo-moments of $X_{\tau_{ab}}$.
%
\begin{corollary}
Let $P$ be a polynomial and $R$ the remainder of the
Euclidean division of $P(x)$ by $(x-a)^N(x-b)^N$.
We have that
\[
\mathbb{E}_x[P(X_{\tau_{ab}})]=R(x).
\]
In particular, the pseudo-moments of $X_{\tau_{ab}}$ are given,
for any $p\in\{0,1,\dots,2N-1\}$, by
\[
\mathbb{E}_x[(X_{\tau_{ab}})^p]=x^p
\]
and for any positive integer $p$, by setting
$c_n=\sum_{k=0}^n \binom{N+k-1}{k}\binom{N+n-1-k}{n-k}a^k\,b^{n-k}$, by
\begin{align*}
\mathbb{E}_x[(X_{\tau_{ab}})^{2N+p})]
&
=x^{2N+p}-\left(\sum_{n=0}^p c_{p-n} x^n\right)\!(x-a)^N(x-b)^N
\\
&
=x^{2N+p}-\left[x^p+N(a+b)\,x^{p-1}\right.
\\
&
\hphantom{=\;}\left.
+\left(\textstyle\frac12\,N(N+1)(a^2+b^2)+N^2ab\right)x^{p-2}+\cdots
\right]\!(x-a)^N(x-b)^N.
\end{align*}
For instance,
\begin{align*}
\mathbb{E}_x[(X_{\tau_{ab}})^{2N}] & =x^{2N}-(x-a)^N(x-b)^N,
\\
\mathbb{E}_x[(X_{\tau_{ab}})^{2N+1}] & =x^{2N+1}-[x+N(a+b)](x-a)^N(x-b)^N,
\\
\mathbb{E}_x[(X_{\tau_{ab}})^{2N+2}] &
=x^{2N+2}-\left[x^2+N(a+b)\,x+\left(\textstyle\frac12\,N(N+1)(a^2+b^2)+N^2ab
\right)\!\right]\!(x-a)^N(x-b)^N.
\end{align*}
\end{corollary}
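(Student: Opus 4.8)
The plan is to deduce the whole statement from formula~(\ref{BVP2}), which is valid for any $(N-1)$ times differentiable $\varphi$ and in particular for an arbitrary polynomial $\varphi=P$, combined with one elementary algebraic observation: the function $(x-a)^N(x-b)^N$ has a zero of order $N$ at each of the points $a$ and $b$, so all its derivatives up to order $N-1$ vanish at $a$ and at $b$. Throughout we drop the indicator $\ind_{\{\tau_{ab}<+\infty\}}$ thanks to the corollary $\mathbb{P}_{\!x}\{\tau_{ab}<+\infty\}=1$.

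First I would treat polynomials $P$ with $\deg P\le 2N-1$, for which the Euclidean remainder is $R=P$ itself, and show $\mathbb{E}_x[P(X_{\tau_{ab}})]=P(x)$. By the interpolation conditions defining the fundamental Hermite polynomials $H_k^{\pm}$, the right-hand side of~(\ref{BVP2}) with $\varphi=P$ is a polynomial of degree at most $2N-1$ whose derivatives at $a$ and at $b$ up to order $N-1$ agree with those of $P$; since $P$ is itself such a polynomial, uniqueness of the Hermite interpolant forces that right-hand side to equal $P(x)$. (Equivalently, this is the identity $\mathbb{E}_x[H_k^{\pm}(X_{\tau_{ab}})]=H_k^{\pm}(x)$ of Remark~\ref{remark-expectation-hermite} together with linearity of $\varphi\mapsto\mathbb{E}_x[\varphi(X_{\tau_{ab}})]$.) In particular $\mathbb{E}_x[(X_{\tau_{ab}})^p]=x^p$ for $0\le p\le 2N-1$.

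Next, for a general polynomial $P=Q\cdot(x-a)^N(x-b)^N+R$ with $\deg R\le 2N-1$, the vanishing observation gives $P^{(k)}(a)=R^{(k)}(a)$ and $P^{(k)}(b)=R^{(k)}(b)$ for $0\le k\le N-1$. Since the right-hand side of~(\ref{BVP2}) depends on $\varphi$ only through these $2N$ numbers, applying~(\ref{BVP2}) to $\varphi=P$ and to $\varphi=R$ yields $\mathbb{E}_x[P(X_{\tau_{ab}})]=\mathbb{E}_x[R(X_{\tau_{ab}})]=R(x)$, the last equality by the previous step. This proves the main assertion.

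Finally I would identify the remainder $R$ when $P(x)=x^{2N+p}$, $p\ge 0$, equivalently the quotient $Q$. Writing $(x-a)^N(x-b)^N=x^{2N}(1-a/x)^N(1-b/x)^N$ and expanding, as formal power series in $u=1/x$, the product of the two negative-binomial series
\[
(1-au)^{-N}=\sum_{i\ge0}\binom{N+i-1}{i}a^iu^i,\qquad
(1-bu)^{-N}=\sum_{j\ge0}\binom{N+j-1}{j}b^ju^j,
\]
one checks that the Cauchy product is precisely $\sum_{n\ge0}c_nu^n$, with $c_n$ exactly as defined in the statement. Hence, for $|x|$ large,
\[
\frac{x^{2N+p}}{(x-a)^N(x-b)^N}=\sum_{n\ge0}c_n\,x^{p-n}
=\sum_{n=0}^{p}c_{p-n}\,x^{n}+\sum_{n\ge p+1}c_n\,x^{p-n}.
\]
Setting $Q(x)=\sum_{n=0}^{p}c_{p-n}x^{n}$ and $R(x)=x^{2N+p}-Q(x)(x-a)^N(x-b)^N$, one obtains $R(x)=(x-a)^N(x-b)^N\sum_{n\ge p+1}c_n x^{p-n}$, so $R(x)/x^{2N}\to0$ as $x\to\infty$ and therefore $\deg R\le 2N-1$; this identifies $Q$ and $R$ as the Euclidean quotient and remainder. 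Combined with the main assertion this gives $\mathbb{E}_x[(X_{\tau_{ab}})^{2N+p}]=x^{2N+p}-\big(\sum_{n=0}^{p}c_{p-n}x^n\big)(x-a)^N(x-b)^N$, and the explicit low-order expansion follows from $c_0=1$, $c_1=N(a+b)$, $c_2=\tfrac12N(N+1)(a^2+b^2)+N^2ab$ (evaluate the defining sum for $c_n$ using $\binom{N-1}{0}=1$, $\binom{N}{1}=N$, $\binom{N+1}{2}=\tfrac12N(N+1)$), the cases $p=0,1,2$ being immediate specializations. The only mildly delicate point is this last one — recognizing the Euclidean quotient as the polynomial part of the Laurent expansion of $x^{2N+p}/[(x-a)^N(x-b)^N]$ at infinity and verifying that the coefficients $c_n$ arise from the Cauchy product of the two binomial series; everything else is routine.
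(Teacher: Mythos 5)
Your proposal is correct and follows essentially the same route as the paper: reduce $P$ to its Euclidean remainder $R$ via the vanishing of the derivatives of $Q(x)(x-a)^N(x-b)^N$ up to order $N-1$ at $a$ and $b$, identify $\mathbb{E}_x[R(X_{\tau_{ab}})]=R(x)$ through the Hermite interpolation formula~(\ref{BVP2}), and compute the quotient for $P(x)=x^{2N+p}$ from the product of the two negative-binomial series. The only cosmetic differences are the order of the two reductions and your explicit check that the polynomial part of the Laurent expansion is indeed the Euclidean quotient, which the paper leaves implicit.
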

%
\begin{proof}
Let us introduce the quotient $Q$ of the
Euclidean division of $P(x)$ by $(x-a)^N(x-b)^N$:
we have $P(x)=Q(x)(x-a)^N(x-b)^N+R(x)$. The polynomial $R$ is of
degree not greater than $(2N-1)$.
Since $a$ and $b$ are roots of the polynomial $P(x)-R(x)=Q(x)(x-a)^N(x-b)^N$
with a multiplicity not less than $N$, the successive derivatives
of $P-R$ up to order $(N-1)$ vanish at $a$ and $b$. Therefore,
by~(\ref{BVP2}), we deduce that
\[
\mathbb{E}_x\!\left[Q(X_{\tau_{ab}})(X_{\tau_{ab}}-a)^N(X_{\tau_{ab}}-b)^N\right]\!=0
\]
and then
\[
\mathbb{E}_x[P(X_{\tau_{ab}})]=\mathbb{E}_x[R(X_{\tau_{ab}})].
\]
Since the polynomial $R$ is of degree not greater than $(2N-1)$,
we can write the decomposition
\[
R(x)=\sum_{k=0}^{N-1} R^{(k)}(a)\,H_k^-(x)+\sum_{k=0}^{N-1} R^{(k)}(b)\,H_k^+(x).
\]
Therefore, appealing to Remark~\ref{remark-expectation-hermite}, we obtain
that
\[
\mathbb{E}_x[P(X_{\tau_{ab}})]=\mathbb{E}_x[R(X_{\tau_{ab}})]
=R(x)=P(x)-Q(x)(x-a)^N(x-b)^N.
\]
Next, we compute the quotient $Q$ when $P(x)=x^{2N+p}$:
\begin{align*}
\frac{x^{2N+p}}{(x-a)^N(x-b)^N}
&
=x^p\left(1-\frac{a\vphantom{b}}{x}\right)^{\!-N}\!\left(1-\frac bx\right)^{\!-N}
\\
&
=x^p \left(\sum_{k=0}^{\infty} \binom{N+k-1}{k} \frac{a^k}{x^k}\right)\!\!
\left(\sum_{\ell=0}^{\infty} \binom{N+\ell-1}{\ell} \frac{b^{\ell}}{x^{\ell}}\right)
\\
&
=\sum_{k=0}^{\infty} c_n x^{p-n}
=\sum_{k=0}^p c_{p-n} x^n+\sum_{n=1}^{\infty} \frac{c_{n+p}}{x^{n}}
\end{align*}
where
\[
c_n=\sum_{k,\ell\ge 0\atop k+\ell=n} \binom{N+k-1}{k}\!\binom{N+\ell-1}{\ell}a^k\,b^{\ell}
=\sum_{k=0}^n \binom{N+k-1}{k}\!\binom{N+n-1-k}{n-k}a^k\,b^{n-k}.
\]
Then, the quotient of $x^{2N+p}$ by $(x-a)^N(x-b)^N$ is equal to
$\sum_{k=0}^n c_n x^{p-n}$. In particular,
\[
c_0=1,\;c_1=N(a+b),\;c_2=\frac12\,N(N+1)(a^2+b^2)+N^2ab.
\]

\end{proof}
%
\begin{remark}\rm
By~(\ref{BVP2}), we easily get that
\[
\mathbb{E}_x\!\left[(X_{\tau_{ab}}-b)^p\ind_{\{\tau_b^+<\tau_a^-\}}\right]=
\left\{\begin{array}{ll}
p\,! \,H_p^+(x) & \text{if } p\le N,
\\
0 & \text{if } p\ge N+1.
\end{array}\right.
\]
This formula suggests the following interpretation of Hermite polynomials in terms
of pseudo-Brownian motion: for $p\in\{0,\dots,N-1\}$,
\[
H_p^+(x)=\frac{1}{p\,!}\,\mathbb{E}_x\!\left[(X_{\tau_{ab}}-b)^p\ind_{\{\tau_b^+<\tau_a^-\}}\right]\!.
\]
\end{remark}
%

\section{The case $N=2$}

For $N=2$, pseudo-Brownian motion is the so-called biharmonic-pseudo-process.
In this case, the settings write
$\theta_1=\mathrm{e}^{\mathrm{i}\,3\pi\!/4}$,
$\theta_2=\mathrm{e}^{\mathrm{i}\,5\pi\!/4}=\overline{\theta_1\!}\,$,
$\theta_3=\mathrm{e}^{\mathrm{i}\,7\pi\!/4}$,
$\theta_4=\mathrm{e}^{\mathrm{i}\,\pi\!/4}=\overline{\theta_3\!}\,$,
and, by setting $\nu=\lambda/4$,
\[
\Delta(\lambda)=\begin{vmatrix}
\,e_{\lambda}^{\theta_1a}           & e_{\lambda}^{\theta_2a}           & e_{\lambda}^{\theta_3a}           & e_{\lambda}^{\theta_4a}
\\[.5ex]
\,\theta_1\,e_{\lambda}^{\theta_1a} & \theta_2\,e_{\lambda}^{\theta_2a} & \theta_3\,e_{\lambda}^{\theta_3a} & \theta_4\,e_{\lambda}^{\theta_4a}
\\[.5ex]
\,e_{\lambda}^{\theta_1b}           & e_{\lambda}^{\theta_2b}           & e_{\lambda}^{\theta_3b}           & e_{\lambda}^{\theta_4b}
\\[.5ex]
\,\theta_1\,e_{\lambda}^{\theta_1b} & \theta_2\,e_{\lambda}^{\theta_2b} & \theta_3\,e_{\lambda}^{\theta_3b} & \theta_4\,e_{\lambda}^{\theta_4b}
\end{vmatrix},
\]
\[
\Delta_0^-(\lambda;x)
=\begin{vmatrix}
\,e_{\lambda}^{\theta_1x}           & e_{\lambda}^{\theta_2x}           & e_{\lambda}^{\theta_3x}           & e_{\lambda}^{\theta_4x}
\\[.5ex]
\,\theta_1\,e_{\lambda}^{\theta_1a} & \theta_2\,e_{\lambda}^{\theta_2a} & \theta_3\,e_{\lambda}^{\theta_3a} & \theta_4\,e_{\lambda}^{\theta_4a}
\\[.5ex]
\,e_{\lambda}^{\theta_1b}           & e_{\lambda}^{\theta_2b}           & e_{\lambda}^{\theta_3b}           & e_{\lambda}^{\theta_4b}
\\[.5ex]
\,\theta_1\,e_{\lambda}^{\theta_1b} & \theta_2\,e_{\lambda}^{\theta_2b} & \theta_3\,e_{\lambda}^{\theta_3b} & \theta_4\,e_{\lambda}^{\theta_4b}
\end{vmatrix},\quad
\Delta_1^-(\lambda;x)
=\begin{vmatrix}
\,e_{\lambda}^{\theta_1a}           & e_{\lambda}^{\theta_2a}           & e_{\lambda}^{\theta_3a}           & e_{\lambda}^{\theta_4a}
\\[.5ex]
\,e_{\lambda}^{\theta_1x}           & e_{\lambda}^{\theta_2x}           & e_{\lambda}^{\theta_3x}           & e_{\lambda}^{\theta_4x}
\\[.5ex]
\,e_{\lambda}^{\theta_1b}           & e_{\lambda}^{\theta_2b}           & e_{\lambda}^{\theta_3b}           & e_{\lambda}^{\theta_4b}
\\[.5ex]
\,\theta_1\,e_{\lambda}^{\theta_1b} & \theta_2\,e_{\lambda}^{\theta_2b} & \theta_3\,e_{\lambda}^{\theta_3b} & \theta_4\,e_{\lambda}^{\theta_4b}
\end{vmatrix},
\]
\[
\Delta_0^+(\lambda;x)
=\begin{vmatrix}
\,e_{\lambda}^{\theta_1a}           & e_{\lambda}^{\theta_2a}           & e_{\lambda}^{\theta_3a}           & e_{\lambda}^{\theta_4a}
\\[.5ex]
\,\theta_1\,e_{\lambda}^{\theta_1a} & \theta_2\,e_{\lambda}^{\theta_2a} & \theta_3\,e_{\lambda}^{\theta_3a} & \theta_4\,e_{\lambda}^{\theta_4a}
\\[.5ex]
\,e_{\lambda}^{\theta_1x}           & e_{\lambda}^{\theta_2x}           & e_{\lambda}^{\theta_3x}           & e_{\lambda}^{\theta_4x}
\\[.5ex]
\,\theta_1\,e_{\lambda}^{\theta_1b} & \theta_2\,e_{\lambda}^{\theta_2b} & \theta_3\,e_{\lambda}^{\theta_3b} & \theta_4\,e_{\lambda}^{\theta_4b}
\end{vmatrix},\quad
\Delta_1^+(\lambda;x)
=\begin{vmatrix}
\,e_{\lambda}^{\theta_1a}           & e_{\lambda}^{\theta_2a}           & e_{\lambda}^{\theta_3a}           & e_{\lambda}^{\theta_4a}
\\[.5ex]
\,\theta_1\,e_{\lambda}^{\theta_1a} & \theta_2\,e_{\lambda}^{\theta_2a} & \theta_3\,e_{\lambda}^{\theta_3a} & \theta_4\,e_{\lambda}^{\theta_4a}
\\[.5ex]
\,e_{\lambda}^{\theta_1b}           & e_{\lambda}^{\theta_2b}           & e_{\lambda}^{\theta_3b}           & e_{\lambda}^{\theta_4b}
\\[.5ex]
\,e_{\lambda}^{\theta_1x}           & e_{\lambda}^{\theta_2x}           & e_{\lambda}^{\theta_3x}           & e_{\lambda}^{\theta_4x}
\end{vmatrix}.
\]
Elementary computations yield that
\[
\Delta(\lambda)=4\left[\cosh\!\big(2\sqrt[4]{\nu}\,(b-a)\big)
+\cos\!\big(2\sqrt[4]{\nu}\,(b-a)\big)-2\right]\!.
\]
Let us expand, e.g., $\Delta_0^-(\lambda;x)$ with respect to its first row:
\[
\Delta_0^-(\lambda;x)=c_1\,e_{\lambda}^{\theta_1x}+ c_2\,e_{\lambda}^{\theta_2x}
+c_3\,e_{\lambda}^{\theta_3x}+c_4\,e_{\lambda}^{\theta_4x}
\]
where $c_1,c_2,c_3,c_4$ are the cofactors of $\Delta_0^-(\lambda;x)$ related
to the first row. Straightforward (but cumbersome) computations yield
that $c_2=\overline{c_1\!}\,$ and $c_4=\overline{c_3\!}\,$ and
\begin{align*}
c_1
&
=(1-\mathrm{i})\,\mathrm{e}^{\sqrt[4]{\nu}\,((2b-a)-\mathrm{i}a)}
+(1+\mathrm{i})\,\mathrm{e}^{\sqrt[4]{\nu}\,(a-\mathrm{i}(2b-a))}
-2\,\mathrm{e}^{(1-\mathrm{i})\sqrt[4]{\nu}\,a},
\\
c_3
&
=(1-\mathrm{i})\,\mathrm{e}^{-\sqrt[4]{\nu}\,((2b-a)-\mathrm{i}a)}
+(1+\mathrm{i})\,\mathrm{e}^{-\sqrt[4]{\nu}\,(a-\mathrm{i}(2b-a))}
-2\,\mathrm{e}^{-(1-\mathrm{i})\sqrt[4]{\nu}\,a}.
\end{align*}
Therefore, we have that
\begin{align*}
\Delta_0^-(\lambda;x)
&
=2\,\Re\big(c_1\,e_{\lambda}^{\theta_1x}+c_3\,e_{\lambda}^{\theta_3x}\big)
\\
&
=2\left[\mathrm{e}^{\sqrt[4]{\nu}\,(x-a)}\cos\!\big(\sqrt[4]{\nu}\,(x+a-2b)\big)
+\mathrm{e}^{-\sqrt[4]{\nu}\,(x-a)}\cos\!\big(\sqrt[4]{\nu}\,(x+a-2b)\big)\right.
\\
&
\hphantom{=\;}
+\mathrm{e}^{\sqrt[4]{\nu}\,(x-a)}\sin\!\big(\sqrt[4]{\nu}\,(x+a-2b)\big)
-\mathrm{e}^{-\sqrt[4]{\nu}\,(x-a)}\sin\!\big(\sqrt[4]{\nu}\,(x+a-2b)\big)
\\
&
\hphantom{=\;}
-2\,\mathrm{e}^{\sqrt[4]{\nu}\,(x-a)}\cos\!\big(\sqrt[4]{\nu}\,(x-a)\big)
-2\,\mathrm{e}^{-\sqrt[4]{\nu}\,(x-a)}\cos\!\big(\sqrt[4]{\nu}\,(x-a)\big)
\\
&
\hphantom{=\;}
+\mathrm{e}^{\sqrt[4]{\nu}\,(x+a-2b)}\cos\!\big(\sqrt[4]{\nu}\,(x-a)\big)
+\mathrm{e}^{-\sqrt[4]{\nu}\,(x+a-2b)}\cos\!\big(\sqrt[4]{\nu}\,(x-a)\big)
\\
&
\left.\hphantom{=\;}
-\mathrm{e}^{\sqrt[4]{\nu}\,(x+a-2b)}\sin\!\big(\sqrt[4]{\nu}\,(x-a)\big)
+\mathrm{e}^{-\sqrt[4]{\nu}\,(x+a-2b)}\sin\!\big(\sqrt[4]{\nu}\,(x-a)\big)\right]
\end{align*}
which simplifies by means of hyperbolic functions into
\begin{align*}
\Delta_0^-(\lambda;x)
&
=4\left[\cosh\!\big(\sqrt[4]{\nu}\,(x-a)\big)\cos\!\big(\sqrt[4]{\nu}\,(x+a-2b)\big)
+\sinh\!\big(\sqrt[4]{\nu}\,(x-a)\big)\sin\!\big(\sqrt[4]{\nu}\,(x+a-2b)\big)\right.
\\
&
\hphantom{=\;}
+\cosh\!\big(\sqrt[4]{\nu}\,(x+a-2b)\big)\cos\!\big(\sqrt[4]{\nu}\,(x-a)\big)
-\sinh\!\big(\sqrt[4]{\nu}\,(x+a-2b)\big)\sin\!\big(\sqrt[4]{\nu}\,(x-a)\big)
\\
&
\left.\hphantom{=\;}
-2\cosh\!\big(\sqrt[4]{\nu}\,(x-a))\cos\!\big(\sqrt[4]{\nu}\,(x-a)\big)\right]\!.
\end{align*}
Quite similar computations yield that
\begin{align*}
\Delta_1^-(\lambda;x)
&
=4\left[\cosh\!\big(\sqrt[4]{\nu}\,(x+a-2b)\big)\sin\!\big(\sqrt[4]{\nu}\,(x-a)\big)
+\sinh\!\big(\sqrt[4]{\nu}\,(x-a)\big)\cos\!\big(\sqrt[4]{\nu}\,(x+a-2b)\big)\right.
\\
&
\left.\hphantom{=\;}
-\cosh\!\big(\sqrt[4]{\nu}\,(x-a)\big)\sin\!\big(\sqrt[4]{\nu}\,(x-a)\big)
-\sinh\!\big(\sqrt[4]{\nu}\,(x-a)\big)\cos\!\big(\sqrt[4]{\nu}\,(x-a)\big)\right]\!.
\end{align*}
The determinants $\Delta_0^+$ and $\Delta_1^+$ can be immediately deduced from
$\Delta_0^-$ and $\Delta_1^-$ by interchanging the roles of $a$ and $b$ as
it can be seen upon interchanging certain rows therein. We obtain that
\begin{align*}
\Delta_0^+(\lambda;x)
&
=4\left[\cosh\!\big(\sqrt[4]{\nu}\,(x-b)\big)\cos\!\big(\sqrt[4]{\nu}\,(x+b-2a)\big)
+\sinh\!\big(\sqrt[4]{\nu}\,(x-b)\big)\sin\!\big(\sqrt[4]{\nu}\,(x+b-2a)\big)\right.
\\
&
\hphantom{=\;}
+\cosh\!\big(\sqrt[4]{\nu}\,(x+b-2a)\big)\cos\!\big(\sqrt[4]{\nu}\,(x-b)\big)
-\sinh\!\big(\sqrt[4]{\nu}\,(x+b-2a)\big)\sin\!\big(\sqrt[4]{\nu}\,(x-b)\big)
\\
&
\left.\hphantom{=\;}
-2\cosh\!\big(\sqrt[4]{\nu}\,(x-b))\cos\!\big(\sqrt[4]{\nu}\,(x-b)\big)\right]\!,
\\
\Delta_1^+(\lambda;x)
&
=4\left[\cosh\!\big(\sqrt[4]{\nu}\,(x+b-2a)\big)\sin\!\big(\sqrt[4]{\nu}\,(x-b)\big)
+\sinh\!\big(\sqrt[4]{\nu}\,(x-b)\big)\cos\!\big(\sqrt[4]{\nu}\,(x+b-2a)\big)\right.
\\
&
\left.\hphantom{=\;}
-\cosh\!\big(\sqrt[4]{\nu}\,(x-b)\big)\sin\!\big(\sqrt[4]{\nu}\,(x-b)\big)
-\sinh\!\big(\sqrt[4]{\nu}\,(x-b)\big)\cos\!\big(\sqrt[4]{\nu}\,(x-b)\big)\right]\!.
\end{align*}
Now, formula~(\ref{exp-joint-dist}) reads
\[
\mathbb{P}_{\!x}\{\tau_{ab}\in\mathrm{d}t,X_{\tau_{ab}}\in\mathrm{d}z\}
/\mathrm{d}t\,\mathrm{d}z
=I_0^-(t;x)\,\delta_a(z) +I_1^-(t;x)\,\delta_a'(z)
+I_0^+(t;x)\,\delta_b(z) +I_1^+(t;x)\,\delta_b'(z)
\]
where the functions $I_0^{\pm}$ and $I_1^{\pm}$ are characterized by
\[
\int_0^{\infty} I_0^{\pm}(t;x) \,\mathrm{e}^{-\lambda t}\,\mathrm{d}t =
\frac{\Delta_0^{\pm}(\lambda;x)}{\Delta(\lambda)},\quad
\int_0^{\infty} I_1^{\pm}(t;x) \,\mathrm{e}^{-\lambda t}\,\mathrm{d}t =
\frac{1}{\sqrt[4]{\lambda}}\frac{\Delta_1^{\pm}(\lambda;x)}{\Delta(\lambda)}.
\]
Concerning the pseudo-distribution of the exit location $X_{\tau_{ab}}$, it is given by
\[
\mathbb{P}_{\!x}\{X_{\tau_{ab}}\in \mathrm{d} z\}/\mathrm{d} z
=H_0^-(x)\,\delta_a(z)-H_1^-(x)\,\delta_a'(z)+H_0^+(x)\,\delta_b(z)-H_1^+(x)\,\delta_b'(z)
\]
with
\begin{align*}
H_0^-(x) &=\frac{(x-b)^2(2x-3a+b)}{(b-a)^3},\quad H_1^-(x)=\frac{(x-a)(x-b)^2}{(b-a)^2},
\\
H_0^+(x) &=-\frac{(x-a)^2(2x+a-3b)}{(b-a)^3},\quad H_1^+(x)=\frac{(x-a)^2(x-b)}{(b-a)^2}.
\end{align*}

When the pseudo-process starts at the middle of the interval $[a,b]$,
we obtain the following expressions for the determinants of interest:
by setting $L=(b-a)/2$,
\begin{align*}
\Delta(\lambda)
&
=32\left[\cosh^2\!\big(\sqrt[4]{\nu}\,L\big)
\sinh^2\!\big(\sqrt[4]{\nu}\,L\big)
-\cos^2\!\big(\sqrt[4]{\nu}\,L\big)
\sin^2\!\big(\sqrt[4]{\nu}\,L\big)\right]\!,
\end{align*}
\begin{align*}
\Delta_0^-\!\left(\lambda;\frac{a+b}{2}\right)=\Delta_0^+\!\left(\lambda;\frac{a+b}{2}\right)
&
=4\left[\cosh\!\big(\sqrt[4]{\nu}\,L\big)\cos\!\big(\sqrt[4]{\nu}\,L\big)
\!\left(\cosh^2\!\big(\sqrt[4]{\nu}\,L\big)+\cos^2\!\big(\sqrt[4]{\nu}\,L\big)-2\right)\right.
\\
&
\left.\hphantom{=\;}
+\sinh\!\big(\sqrt[4]{\nu}\,L\big)\sin\!\big(\sqrt[4]{\nu}\,L\big)
\!\left(\cosh^2\!\big(\sqrt[4]{\nu}\,L\big)-\cos^2\!\big(\sqrt[4]{\nu}\,L\big)\right)\right]\!,
\\
\Delta_1^-\!\left(\lambda;\frac{a+b}{2}\right)=-\Delta_1^+\!\left(\lambda;\frac{a+b}{2}\right)
&
=4\left[\cosh\!\big(\sqrt[4]{\nu}\,L\big)\sin\!\big(\sqrt[4]{\nu}\,L\big)
\sinh^2\!\big(\sqrt[4]{\nu}\,L\big)\right.
\\
&
\left.\hphantom{=\;}
-\sinh\!\big(\sqrt[4]{\nu}\,L\big)\cos\!\big(\sqrt[4]{\nu}\,L\big)
\sin^2\!\big(\sqrt[4]{\nu}\,L\big)\right]\!.
\end{align*}
Hence, in this case, we have the following symmetric expression:
\[
\mathbb{P}_{\!\frac{a+b}{2}}\{\tau_{ab}\in\mathrm{d}t,X_{\tau_{ab}}\in\mathrm{d}z\}
/\mathrm{d}t\,\mathrm{d}z
=I_0^+\!\left(t;\frac{a+b}{2}\right)(\delta_a(z)+\delta_b(z))
+I_1^+\!\left(t;\frac{a+b}{2}\right)(\delta_b'(z)-\delta_a'(z)).
\]
Moreover,
\[
H_0^-\!\left(\lambda;\frac{a+b}{2}\right)=H_0^+\!\left(\lambda;\frac{a+b}{2}\right)=\frac12,\quad
H_1^-\!\left(\lambda;\frac{a+b}{2}\right)=-H_1^+\!\left(\lambda;\frac{a+b}{2}\right)=\frac L4.
\]
Then,
\[
\mathbb{P}_{\!\frac{a+b}{2}}\{X_{\tau_{ab}}\in \mathrm{d} z\}/\mathrm{d} z
=\frac12\,(\delta_a(z)+\delta_b(z))+\frac{b-a}{8}\,(\delta_b'(z)-\delta_a'(z)).
\]

\appendix
\section{Appendix}\label{appendix1}

\subsection{Asymptotics of $\Delta(\lambda)$ and $\Delta_k^{\pm}(\lambda;x)$
as $b$ tends to $+\infty$}

In this appendix, we check limits~(\ref{limits}).
By factorizing the $\ell$th column of the determinant $\Delta(\lambda)$
by $\mathrm{e}_{\lambda}^{\theta_{\ell}\,a}$ for each $\ell\in\{1,\dots,2N\}$
and observing that $\sum_{\ell=1}^{2N}\theta_{\ell}=0$, we find that
\[
\Delta(\lambda)=\begin{vmatrix}
\,1                                                      & \cdots & 1
\\[.5ex]
\,\theta_1                                               & \cdots & \theta_{2N}
\\[.5ex]
\,\vdots                                                 &        &\vdots
\\[.5ex]
\,\theta_1^{N-1}                                         & \cdots & \theta_{2N}^{N-1}
\\[-0.5ex]
\hdotsfor{3}
\\
\,\mathrm{e}_{\lambda}^{\theta_1 (b-a)}                  & \cdots & \mathrm{e}_{\lambda}^{\theta_{2N} (b-a)}
\\[.5ex]
\,\theta_1 \,\mathrm{e}_{\lambda}^{\theta_1 (b-a)}       & \cdots & \theta_{2N} \,\mathrm{e}_{\lambda}^{\theta_{2N} (b-a)}
\\[.5ex]
\,\vdots                                                 &        &\vdots
\\[.5ex]
\,\theta_1^{N-1} \,\mathrm{e}_{\lambda}^{\theta_1 (b-a)} & \cdots & \theta_{2N}^{N-1} \,\mathrm{e}_{\lambda}^{\theta_{2N} (b-a)}
\end{vmatrix}\!.
\]
We separate $\Delta(\lambda)$ into four squared blocks as follows:
\[
\Delta(\lambda)=\begin{vmatrix}
\,V          & \hspace{-1em}\raisebox{-0.5ex}[0ex]{\vdots}\hspace{-1em} & \tilde{V} \\[-1ex]
\hdotsfor{3}\\
\,W(\lambda) & \hspace{-1em}\raisebox{-0.2ex}[0ex]{\vdots}\hspace{-1em} & \tilde{W}(\lambda)
\end{vmatrix}
\]
with
\[
V=\begin{pmatrix}
\,1              & \cdots & 1
\\[.5ex]
\,\theta_1       & \cdots & \theta_N
\\[.5ex]
\,\vdots         &        &\vdots
\\[.5ex]
\,\theta_1^{N-1} & \cdots & \theta_N^{N-1}
\end{pmatrix}\!,\quad
\tilde{V}=\begin{pmatrix}
\,1                  & \cdots & 1
\\[.5ex]
\,\theta_{N+1}       & \cdots & \theta_{2N}
\\[.5ex]
\,\vdots             &        &\vdots
\\[.5ex]
\,\theta_{N+1}^{N-1} & \cdots & \theta_{2N}^{N-1}
\end{pmatrix}\!,
\]
\begin{align*}
W(\lambda)
&
=\begin{pmatrix}
\,\mathrm{e}_{\lambda}^{\theta_1 (b-a)}                  & \cdots & \mathrm{e}_{\lambda}^{\theta_N (b-a)}
\\[.5ex]
\,\theta_1 \,\mathrm{e}_{\lambda}^{\theta_1 (b-a)}       & \cdots & \theta_N \,\mathrm{e}_{\lambda}^{\theta_N (b-a)}
\\[.5ex]
\,\vdots                                                 &        &\vdots
\\[.5ex]
\,\theta_1^{N-1} \,\mathrm{e}_{\lambda}^{\theta_1 (b-a)} & \cdots & \theta_N^{N-1} \mathrm{e}_{\lambda}^{\theta_N (b-a)}
\end{pmatrix}\!,
\\[1ex]
\tilde{W}(\lambda)
&
=\begin{pmatrix}
\,\mathrm{e}_{\lambda}^{\theta_{N+1} (b-a)}                      & \cdots & \mathrm{e}_{\lambda}^{\theta_{2N} (b-a)}
\\[.5ex]
\,\theta_{N+1} \,\mathrm{e}_{\lambda}^{\theta_{N+1} (b-a)}       & \cdots & \theta_{2N} \,\mathrm{e}_{\lambda}^{\theta_{2N} (b-a)}
\\[.5ex]
\,\vdots                                                         &        &\vdots
\\[.5ex]
\,\theta_{N+1}^{N-1} \,\mathrm{e}_{\lambda}^{\theta_{N+1} (b-a)} & \cdots & \theta_{2N}^{N-1} \mathrm{e}_{\lambda}^{\theta_{2N} (b-a)}
\end{pmatrix}\!.
\end{align*}
Due to the fact that $\Re(\theta_{\ell})<0$ for ${\ell}\in\{1,\dots,N\}$ and
$\Re(\theta_{\ell})>0$ for ${\ell}\in\{N+1,\dots,2N\}$, it may be easily seen by using
an expansion by blocks of type $N\times N$ that the leading terms
of $\Delta(\lambda)$ are obtained by performing the product of
the determinants of both diagonal blocks $V$ and $\tilde{W}(\lambda)$,
namely:
\[
\Delta(\lambda)\underset{b\to+\infty}{\sim} \det(V)\times\det(\tilde{W}(\lambda)).
\]

Similarly, we decompose $\Delta_k^-(\lambda;x)$ into
\[
\Delta_k^-(\lambda;x)=\begin{vmatrix}
\,V_k(\lambda;x) & \hspace{-1em}\raisebox{-0.5ex}[0ex]{\vdots}\hspace{-1em} & \tilde{V}_k(\lambda;x) \\[-1ex]
\hdotsfor{3}\\
\,W(\lambda)     & \hspace{-1em}\raisebox{-0.2ex}[0ex]{\vdots}\hspace{-1em} & \tilde{W}(\lambda)
\end{vmatrix}
\]
with
\[
V_k(\lambda;x)=\begin{pmatrix}
\,1                                     & \cdots & 1
\\[.5ex]
\,\theta_1                              & \cdots & \theta_N
\\[.5ex]
\,\vdots                                &        &\vdots
\\[.5ex]
\,\theta_1^{k-1}                        & \cdots & \theta_N^{k-1}
\\[.5ex]
\,\mathrm{e}_{\lambda}^{\theta_1 (x-a)} & \cdots &\,\mathrm{e}_{\lambda}^{\theta_N (x-a)}
\\[.5ex]
\,\theta_1^{k+1}                        & \cdots & \theta_N^{k+1}
\\[.5ex]
\,\vdots                                &        &\vdots
\\[.5ex]
\,\theta_1^{N-1}                        & \cdots & \theta_N^{N-1}
\end{pmatrix}\!,\quad
\tilde{V}_k(\lambda;x)=\begin{pmatrix}
\,1                                         & \cdots & 1
\\[.5ex]
\,\theta_{N+1}                              & \cdots & \theta_{2N}
\\[.5ex]
\,\vdots                                    &        &\vdots
\\[.5ex]
\,\theta_{N+1}^{k-1}                        & \cdots & \theta_{2N}^{k-1}
\\[.5ex]
\,\mathrm{e}_{\lambda}^{\theta_{N+1} (x-a)} & \cdots &\,\mathrm{e}_{\lambda}^{\theta_{2N} (x-a)}
\\[.5ex]
\,\theta_{N+1}^{k+1}                        & \cdots & \theta_{2N}^{k+1}
\\[.5ex]
\,\vdots                                    &        &\vdots
\\[.5ex]
\,\theta_{N+1}^{N-1}                        & \cdots & \theta_{2N}^{N-1}
\end{pmatrix}\!.
\]
We can easily see that, for $x\in(a,b)$,
\[
\Delta_k^-(\lambda;x)\underset{b\to+\infty}{\sim} \det(V_k(\lambda;x))
\times\det(\tilde{W}(\lambda)).
\]
As a byproduct, we get the first limit of~(\ref{limits}):
\[
\frac{\Delta_k^-(\lambda;x)}{\Delta(\lambda)}\underset{b\to+\infty}{\longrightarrow}
\frac{\det(V_k(\lambda;x))}{\det(V)}.
\]
By expanding the determinant of $V_k(\lambda;x)$ with respect to its
$k$th row, we obtain that
\[
\frac{\det(V_k(\lambda;x))}{\det(V)}=
\sum_{\ell=1}^{N} \alpha_{k\ell}\,\mathrm{e}_{\lambda}^{\theta_{\ell}\,(x-a)}
\]
where the coefficients $\alpha_{k\ell}$ are explicitly written in
Remark~\ref{remark-limit}.

Next, concerning the determinant $\Delta_k^+(\lambda;x)$,
by factorizing the $\ell$th column by $\mathrm{e}_{\lambda}^{\theta_{\ell}\,b}$
for each $\ell\in\{1,\dots,2N\}$, using the identity $\sum_{\ell=1}^{2N}\theta_{\ell}=0$
and permuting the $k$th and $(N+k)$th rows for each $k\in\{1,\dots,N\}$,
we get that
\[
\Delta_k^+(\lambda;x)=(-1)^N\begin{vmatrix}
\,1                                                      & \cdots & 1
\\[.5ex]
\,\vdots                                                 &        &\vdots
\\[.5ex]
\,\theta_1^{k-1}                                         & \cdots & \theta_{2N}^{k-1}
\\[.5ex]
\,\mathrm{e}_{\lambda}^{\theta_1 (x-b)}                  & \cdots & \mathrm{e}_{\lambda}^{\theta_{2N} (x-b)}
\\[.5ex]
\,\theta_1^{k+1}                                         & \cdots & \theta_{2N}^{k+1}
\\[.5ex]
\,\vdots                                                 &        &\vdots
\\[.5ex]
\,\theta_1^{N-1}                                         & \cdots & \theta_{2N}^{N-1}
\\[-0.5ex]
\hdotsfor{3}
\\
\,\mathrm{e}_{\lambda}^{\theta_1 (a-b)}                  & \cdots & \mathrm{e}_{\lambda}^{\theta_{2N} (a-b)}
\\[.5ex]
\,\vdots                                                 &        &\vdots
\\[.5ex]
\,\theta_1^{N-1} \,\mathrm{e}_{\lambda}^{\theta_1 (a-b)} & \cdots & \theta_{2N}^{N-1} \,\mathrm{e}_{\lambda}^{\theta_{2N} (a-b)}
\end{vmatrix}\!.
\]
As previously, we decompose $\Delta_k^+(\lambda;x)$ into
\[
\Delta_k^+(\lambda;x)=(-1)^N\begin{vmatrix}
\,Y_k(\lambda;x) & \hspace{-1em}\raisebox{-0.5ex}[0ex]{\vdots}\hspace{-1em} & \tilde{Y}_k(\lambda;x) \\[-1ex]
\hdotsfor{3}\\
\,Z(\lambda)     & \hspace{-1em}\raisebox{-0.2ex}[0ex]{\vdots}\hspace{-1em} & \tilde{Z}(\lambda)
\end{vmatrix}
=\begin{vmatrix}
\,\tilde{Y}_k(\lambda;x) & \hspace{-1em}\raisebox{-0.5ex}[0ex]{\vdots}\hspace{-1em} & Y_k(\lambda;x) \\[-1ex]
\hdotsfor{3}\\
\,\tilde{Z}(\lambda)     & \hspace{-1em}\raisebox{-0.2ex}[0ex]{\vdots}\hspace{-1em} & Z(\lambda)
\end{vmatrix}
\]
with
\[
Y_k(\lambda;x)=\begin{pmatrix}
\,1                                     & \cdots & 1
\\[.5ex]
\,\vdots                                &        &\vdots
\\[.5ex]
\,\theta_1^{k-1}                        & \cdots & \theta_N^{k-1}
\\[.5ex]
\,\mathrm{e}_{\lambda}^{\theta_1 (x-b)} & \cdots & \mathrm{e}_{\lambda}^{\theta_N (x-b)}
\\[.5ex]
\,\theta_1^{k+1}                        & \cdots & \theta_N^{k+1}
\\[.5ex]
\,\vdots                                &        &\vdots
\\[.5ex]
\,\theta_1^{N-1}                        & \cdots & \theta_N^{N-1}
\end{pmatrix}\!,\quad
\tilde{Y}_k(\lambda;x)=\begin{pmatrix}
\,1                                         & \cdots & 1
\\[.5ex]
\,\vdots                                    &        &\vdots
\\[.5ex]
\,\theta_{N+1}^{k-1}                        & \cdots & \theta_{2N}^{k-1}
\\[.5ex]
\,\mathrm{e}_{\lambda}^{\theta_{N+1} (x-b)} & \cdots & \mathrm{e}_{\lambda}^{\theta_{2N} (x-b)}
\\[.5ex]
\,\theta_{N+1}^{k+1}                        & \cdots & \theta_{2N}^{k+1}
\\[.5ex]
\,\vdots                                    &        &\vdots
\\[.5ex]
\,\theta_{N+1}^{N-1}                        & \cdots & \theta_{2N}^{N-1}
\end{pmatrix}\!,
\]
\begin{align*}
Z(\lambda)
&
=\begin{pmatrix}
\,\mathrm{e}_{\lambda}^{\theta_1 (a-b)}                  & \cdots & \mathrm{e}_{\lambda}^{\theta_N (a-b)}
\\[.5ex]
\,\theta_1 \,\mathrm{e}_{\lambda}^{\theta_1 (a-b)}       & \cdots & \theta_N \,\mathrm{e}_{\lambda}^{\theta_N (a-b)}
\\[.5ex]
\,\vdots                                                 &        &\vdots
\\[.5ex]
\,\theta_1^{N-1} \,\mathrm{e}_{\lambda}^{\theta_1 (a-b)} & \cdots & \theta_N^{N-1} \mathrm{e}_{\lambda}^{\theta_N (a-b)}
\end{pmatrix}\!,
\\[1ex]
\tilde{Z}(\lambda)
&
=\begin{pmatrix}
\,\mathrm{e}_{\lambda}^{\theta_{N+1} (a-b)}                      & \cdots & \mathrm{e}_{\lambda}^{\theta_{2N} (a-b)}
\\[.5ex]
\,\theta_{N+1} \,\mathrm{e}_{\lambda}^{\theta_{N+1} (a-b)}       & \cdots & \theta_{2N} \,\mathrm{e}_{\lambda}^{\theta_{2N} (a-b)}
\\[.5ex]
\,\vdots                                                         &        &\vdots
\\[.5ex]
\,\theta_{N+1}^{N-1} \,\mathrm{e}_{\lambda}^{\theta_{N+1} (a-b)} & \cdots & \theta_{2N}^{N-1} \mathrm{e}_{\lambda}^{\theta_{2N} (a-b)}
\end{pmatrix}\!.
\end{align*}
Finally, by remarking that $\theta_{N+\ell}=-\theta_{\ell}$ for any
$\ell\in\{1,\dots,N\}$, we derive that
\[
\Delta_k^+(\lambda;x)=(-1)^k\begin{vmatrix}
\,U_k(\lambda;x) & \hspace{-1em}\raisebox{-0.5ex}[0ex]{\vdots}\hspace{-1em} & \tilde{U}_k(\lambda;x) \\[-1ex]
\hdotsfor{3}\\
\,W(\lambda)     & \hspace{-1em}\raisebox{-0.2ex}[0ex]{\vdots}\hspace{-1em} & \tilde{W}(\lambda)
\end{vmatrix}
\]
where the matrices $U_k(\lambda;x)$ and $\tilde{U}_k(\lambda;x)$
are deduced from $V_k(\lambda;x)$ and $\tilde{V}_k(\lambda;x)$ by changing
$(x-a)$ into $(b-x)$, that is, $U_k(\lambda;x)=V_k(\lambda;a+b-x)$
and $\tilde{U}_k(\lambda;x)=\tilde{V}_k(\lambda;a+b-x)$.
As a byproduct, we derive the identity
\begin{equation}\label{symmetry}
\Delta_k^+(\lambda;x)=(-1)^k\Delta_k^-(\lambda;a+b-x)
\end{equation}
which is evoked in Remark~\ref{remark-symmetry}.
Thanks to an expansion by blocks, we can see that, for $x\in(a,b)$,
\[
\Delta_k^+(\lambda;x)\underset{b\to+\infty}{\sim} (-1)^k\det(V_k(\lambda;a+b-x))
\times\det(\tilde{W}(\lambda))=o[\det(\tilde{W}(\lambda))].
\]
From this, we deduce the second limit of~(\ref{limits}):
\[
\frac{\Delta_k^+(\lambda;x)}{\Delta(\lambda)}\underset{b\to+\infty}{\longrightarrow}0.
\]

\subsection{Asymptotics of $\Delta(\lambda)$ and $\Delta_k^{\pm}(\lambda;x)$
as $\lambda$ tends to $0^+$ or $+\infty$}

The procedure depicted in the previous subparagraph can be carried out
\textit{mutatis mutandis} in the case where $\lambda$ tends to $+\infty$.
This yields the following limiting result:
\begin{equation}\label{limits2}
\frac{\Delta_k^{\pm}(\lambda;x)}{\Delta(\lambda)}\underset{\lambda\to+\infty}{\longrightarrow}0,
\end{equation}
the rate of convergence being exponential.
Then $I_k^{\pm}(t;x)\underset{t\to 0^+}{\longrightarrow}0$.
Below, we examine the case where $\lambda$ tends to $0^+$.

\subsubsection{Asymptotics of $\Delta(\lambda)$ as $\lambda$ tends to $0^+$}

Set $c=\lambda^{1/(2N)}(b-a)$. The number $c$ tends to $0$.
We expand the exponentials lying in $\Delta(\lambda)$ into power series:
for $k\in\{0,\dots,N-1\}$,
\[
\theta_{\ell}^k \,\mathrm{e}_{\lambda}^{\theta_k (b-a)}
=\sum_{i=0}^{\infty} \theta_{\ell}^{i+k}\, \frac{c^i}{i!}
=\sum_{i=k}^{\infty} \theta_{\ell}^i \,\frac{c^{i-k}}{(i-k)!}.
\]
Then,
\[
\Delta(\lambda)=\begin{vmatrix}
\,1                                                           & \cdots & 1
\\[.5ex]
\,\theta_1                                                    & \cdots & \theta_{2N}
\\[.5ex]
\,\vdots                                                      &        &\vdots
\\[.5ex]
\,\theta_1^{N-1}                                              & \cdots & \theta_{2N}^{N-1}
\\[-0.5ex]
\hdotsfor{3}
\\
\,\sum_{i=0}^{\infty} \,\theta_1^i \frac{c^{i}}{i!}             & \cdots & \sum_{i=0}^{\infty} \,\theta_{2N}^i \frac{c^{i}}{i!}
\\[.5ex]
\,\sum_{i=1}^{\infty} \,\theta_1^i \frac{c^{i-1}}{(i-1)!}       & \cdots & \sum_{i=1}^{\infty} \,\theta_{2N}^i \frac{c^{i-1}}{(i-1)!}
\\[.5ex]
\,\vdots                                                        &        & \vdots
\\[.5ex]
\,\sum_{i=N-1}^{\infty} \,\theta_1^i \frac{c^{i-N+1}}{(i-N+1)!} & \cdots & \sum_{i=N-1}^{\infty} \,\theta_{2N}^i \frac{c^{i-N+1}}{(i-N+1)!}
\end{vmatrix}\!.
\]
By multilinearity, we see that the terms including a power of $\theta_{\ell}$
less than $N$ can be discarded (for these terms, the corresponding determinant
has two or more identical rows, thus it vanishes). Hence, the determinant $\Delta(\lambda)$
does not change if we only keep the sums
$\sum_{i=N}^{\infty} \theta_{\ell}^i \,c^{i-k}/(i-k)!$:
\[
\Delta(\lambda)=\begin{vmatrix}
\,1                                                           & \cdots & 1
\\[.5ex]
\,\theta_1                                                    & \cdots & \theta_{2N}
\\[.5ex]
\,\vdots                                                      &        &\vdots
\\[.5ex]
\,\theta_1^{N-1}                                              & \cdots & \theta_{2N}^{N-1}
\\[-0.5ex]
\hdotsfor{3}
\\
\,\sum_{i=N}^{\infty} \,\theta_1^i \frac{c^{i}}{i!}           & \cdots & \sum_{i=N}^{\infty} \,\theta_{2N}^i \frac{c^{i}}{i!}
\\[.5ex]
\,\sum_{i=N}^{\infty} \,\theta_1^i \frac{c^{i-1}}{(i-1)!}     & \cdots & \sum_{i=N}^{\infty} \,\theta_{2N}^i \frac{c^{i-1}}{(i-1)!}
\\[.5ex]
\,\vdots                                                      &        & \vdots
\\[.5ex]
\,\sum_{i=N}^{\infty} \,\theta_1^i \frac{c^{i-N+1}}{(i-N+1)!} & \cdots & \sum_{i=N}^{\infty} \,\theta_{2N}^i \frac{c^{i-N+1}}{(i-N+1)!}
\end{vmatrix}\!.
\]
By multilinearity, we can rewrite $\Delta(\lambda)$ as
\[
\Delta(\lambda)=\sum_{i_1,\dots,i_N \ge N\atop i_1,\dots,i_N\text{ all distinct}}
\frac{c^{i_1+(i_2-1)+\dots+(i_N-N+1)}}{i_1!(i_2-1)!\cdots(i_N-N+1)!}
\begin{vmatrix}
\,1              & \cdots & 1
\\[.5ex]
\,\theta_1       & \cdots & \theta_{2N}
\\[.5ex]
\,\vdots         &        & \vdots
\\[.5ex]
\,\theta_1^{N-1} & \cdots & \theta_{2N}^{N-1}
\\[-0.5ex]
\hdotsfor{3}
\\
\,\theta_1^{i_1} & \cdots & \theta_{2N}^{i_1}
\\[.5ex]
\,\theta_1^{i_2} & \cdots & \theta_{2N}^{i_2}
\\[.5ex]
\,\vdots         &        & \vdots
\\[.5ex]
\,\theta_1^{i_N} & \cdots & \theta_{2N}^{i_N}
\end{vmatrix}\!.
\]
Because of the conditions on the indices $i_1,\dots,i_N$, the least power
of $c$ is not less than $N^2$: indeed,
the indices being distinct and not less than $N$, we have
$i_1+i_2+\dots+i_N\ge N+(N+1)+\dots+(2N-1)$
or, equivalently, $i_1+(i_2-1)+\dots+(i_N-N+1)\ge N^2$.
Moreover, if an index is greater than $(2N-1)$, say $i_N\ge 2N$,
then $i_1+i_2+\dots+i_{N-1}+i_N\ge N+(N+1)+\dots+(2N-2)+(2N)$,
that is, $i_1+(i_2-1)+\dots+(i_N-N+1)\ge N^2+1$.
In words, the term $c^{N^2}$ is obtained at most for the
indices not greater than $(2N-1)$. Consequently, we see that
the terms of the sums corresponding to $i$ greater
than $(2N-1)$ can be neglected when $c$ tends to 0, namely:
\[
\Delta(\lambda) \underset{c\to 0^+}{=}\begin{vmatrix}
\,1                                                         & \cdots & 1
\\[.5ex]
\,\theta_1                                                  & \cdots & \theta_{2N}
\\[.5ex]
\,\vdots                                                    &        &\vdots
\\[.5ex]
\,\theta_1^{N-1}                                            & \cdots & \theta_{2N}^{N-1}
\\[-0.5ex]
\hdotsfor{3}
\\
\,\sum_{i=N}^{2N-1} \,\theta_1^i \frac{c^{i}}{i!}           & \cdots & \sum_{i=N}^{2N-1} \,\theta_{2N}^i \frac{c^{i}}{i!}
\\[.5ex]
\,\sum_{i=N}^{2N-1} \,\theta_1^i \frac{c^{i-1}}{(i-1)!}     & \cdots & \sum_{i=N}^{2N-1} \,\theta_{2N}^i \frac{c^{i-1}}{(i-1)!}
\\[.5ex]
\,\vdots                                                    &        & \vdots
\\[.5ex]
\,\sum_{i=N}^{2N-1} \,\theta_1^i \frac{c^{i-N+1}}{(i-N+1)!} & \cdots & \sum_{i=N}^{2N-1} \,\theta_{2N}^i \frac{c^{i-N+1}}{(i-N+1)!}
\end{vmatrix}+o\!\left(c^{N^2}\right)
\]
We observe that the matrix lying in the foregoing determinant
can be factorized into the product of the two following matrices:
\[
A_1=\begin{pmatrix}
\,I & \hspace{-1em}\raisebox{-0.5ex}[0ex]{\vdots}\hspace{-1em} & O \\[-1ex]
\hdotsfor{3}\\
\,O & \hspace{-1em}\raisebox{-0.2ex}[0ex]{\vdots}\hspace{-1em} & B
\end{pmatrix}\!,
\quad
A_2=\begin{pmatrix}
\,1               & \cdots & 1
\\[.5ex]
\,\theta_1        & \cdots & \theta_{2N}
\\[.5ex]
\,\vdots          &        &\vdots
\\[.5ex]
\,\theta_1^{2N-1} & \cdots & \theta_{2N}^{2N-1}
\end{pmatrix}
\]
where $I$ and $O$ are respectively the unit and zero matrices of type $N\times N$,
and
\[
B=\begin{pmatrix}
\,\frac{c^{N}}{N!}       & \frac{c^{N+1}}{(N+1)!} & \cdots & \frac{c^{2N-1}}{(2N-1)!}
\\[.5ex]
\,\frac{c^{N-1}}{(N-1)!} & \frac{c^{N}}{N!}       & \cdots & \frac{c^{2N-2}}{(2N-2)!}
\\[.5ex]
\,\vdots                 & \vdots                 &        & \vdots
\\[.5ex]
\,c                      & \frac{c^{2}}{2!}       & \cdots & \frac{c^{N}}{N!}
\end{pmatrix}\!.
\]
We can decompose $B$ into $C_1\tilde{B}C_2$ where $C_1$ and $C_2$ are the diagonal matrices
with $c^N,c^{N-1},\dots,c$ and $1,c,\dots,c^{N-1}$ as diagonal terms respectively,
and
\[
\tilde{B}=\begin{pmatrix}
\,\frac{1}{N!}     & \frac{1}{(N+1)!} & \cdots & \frac{1}{(2N-1)!}
\\[.5ex]
\,\frac{1}{(N-1)!} & \frac{1}{N!}     & \cdots & \frac{1}{(2N-2)!}
\\[.5ex]
\,\vdots           & \vdots           &        & \vdots
\\[.5ex]
\,1                & \frac{1}{2!}     & \cdots & \frac{1}{N!}
\end{pmatrix}\!.
\]
Hence, all this discussion plainly entails that
\[
\Delta(\lambda) \underset{c \to 0^+}{\sim} \det(A_1)\times\det(A_2)
=\det(A_2)\times\det(\tilde{B})\times\det(C_1)\times\det(C_2)
=constant \times c^{N^2}
\]
where the constant does not vanish, or, by means of the variable $\lambda$,
\begin{equation}\label{asymptotics1}
\Delta(\lambda) \underset{\lambda\to 0^+}{\sim} constant \times
\lambda^{N/2}.
\end{equation}

\subsubsection{Asymptotics of $\Delta_k^{\pm}(\lambda;x)$ as $\lambda$ tends to $0^+$}

A similar analysis can be carried out in the case of the determinant
$\Delta_k^{\pm}(\lambda;x)$. Recall that $c=\lambda^{1/(2N)}(b-a)$
and set $\gamma=\lambda^{1/(2N)}$ $(x-a)$.
The numbers $c$ and $\gamma$ tend to $0$ as $\lambda$ tends to $0^+$. E.g., for
$\Delta_k^-(\lambda;x)$, we have that
\[
\Delta_k^{\pm}(\lambda;x)=\begin{vmatrix}
\,1                                                             & \cdots & 1
\\[.5ex]
\,\theta_1                                                      & \cdots & \theta_{2N}
\\[.5ex]
\,\vdots                                                        &        &\vdots
\\[.5ex]
\,\theta_1^{k-1}                                                & \cdots & \theta_{2N}^{k-1}
\\
\,\sum_{i=0}^{\infty} \,\theta_1^i \frac{\gamma^i}{i!}          & \cdots & \sum_{i=0}^{\infty} \,\theta_{2N}^i \frac{\gamma^i}{i!}
\\[1ex]
\,\theta_1^{k+1}                                                & \cdots & \theta_{2N}^{k+1}
\\[.5ex]
\,\vdots                                                        &        &\vdots
\\[.5ex]
\,\theta_1^{N-1}                                                & \cdots & \theta_{2N}^{N-1}
\\[-0.5ex]
\hdotsfor{3}
\\
\,\sum_{i=0}^{\infty} \,\theta_1^i \frac{c^{i}}{i!}             & \cdots & \sum_{i=0}^{\infty} \,\theta_{2N}^i \frac{c^{i}}{i!}
\\[.5ex]
\,\sum_{i=1}^{\infty} \,\theta_1^i \frac{c^{i-1}}{(i-1)!}       & \cdots & \sum_{i=1}^{\infty} \,\theta_{2N}^i \frac{c^{i-1}}{(i-1)!}
\\[.5ex]
\,\vdots                                                        &        & \vdots
\\[.5ex]
\,\sum_{i=N-1}^{\infty} \,\theta_1^i \frac{c^{i-N+1}}{(i-N+1)!} & \cdots & \sum_{i=N-1}^{\infty} \,\theta_{2N}^i \frac{c^{i-N+1}}{(i-N+1)!}
\end{vmatrix}\!.
\]
As previously, this determinant remains unchanged by removing the terms
related to the indices $0,1,\dots,k-1,k+1,\dots,N-1$ in each sum.
Moreover, for obtaining an asymptotics when $c,\gamma$ tend to 0
(actually $c$ and $\gamma$ have the same order of growth when $\lambda$ tends to 0),
it is enough to keep the terms related to the indices not greater than $(2N-1)$.
Then, by setting $I_k=\{k\}\cup \{N,N+1,\dots,2N-1\}$,
\[
\Delta_k^-(\lambda;x)\underset{c, \gamma\to 0^+}{=}\begin{vmatrix}
\,1                                                                     & \cdots & 1
\\[.5ex]
\,\theta_1                                                              & \cdots & \theta_{2N}
\\[.5ex]
\,\vdots                                                                &        &\vdots
\\[.5ex]
\,\theta_1^{k-1}                                                        & \cdots & \theta_{2N}^{k-1}
\\
\,\sum_{i\in I_k} \,\theta_1^i \frac{\gamma^i}{i!}                      & \cdots & \sum_{i\in I_k} \,\theta_{2N}^i \frac{\gamma^i}{i!}
\\[1ex]
\,\theta_1^{k+1}                                                        & \cdots & \theta_{2N}^{k+1}
\\[.5ex]
\,\vdots                                                                &        &\vdots
\\[.5ex]
\,\theta_1^{N-1}                                                        & \cdots & \theta_{2N}^{N-1}
\\[-0.5ex]
\hdotsfor{3}
\\
\,\sum_{i\in I_k} \,\theta_1^i \frac{c^{i}}{i!}                         & \cdots & \sum_{i\in I_k} \,\theta_{2N}^i \frac{c^{i}}{i!}
\\[.5ex]
\,\sum_{i\in I_k\atop i\ge 1} \,\theta_1^i \frac{c^{i-1}}{(i-1)!}       & \cdots & \sum_{i\in I_k\atop i\ge 1} \,\theta_{2N}^i \frac{c^{i-1}}{(i-1)!}
\\[.5ex]
\,\vdots                                                                &        & \vdots
\\[.5ex]
\,\sum_{i\in I_k\atop i\ge N-1} \,\theta_1^i \frac{c^{i-N+1}}{(i-N+1)!} & \cdots & \sum_{i\in I_k\atop i\ge N-1} \,\theta_{2N}^i \frac{c^{i-N+1}}{(i-N+1)!}
\end{vmatrix}+o\!\left(\gamma^k c^{N^2}\right)\!.
\]
We observe that the matrix lying in the above determinant is the product
of $\tilde{A}_1$ by $A_2$ where
\begin{itemize}
\item
$\tilde{A}_1=\begin{pmatrix}
\,I_k     & \hspace{-1em}\raisebox{-0.5ex}[0ex]{\vdots}\hspace{-1em} & O_{2,k} \\[-1ex]
\hdotsfor{3}\\
\,O_{1,k} & \hspace{-1em}\raisebox{-0.2ex}[0ex]{\vdots}\hspace{-1em} & B
\end{pmatrix}$;
\item
$I_k$ is the diagonal matrix of type $N\times N$ with diagonal terms
equal to $1$ except for the $(k+1)$th which is $\gamma^k/k!$;
\item
$O_{1,k}$ is the matrix of type $N\times N$ with all terms equal to 0
except for the $(k+1)$th column which is made of
$c^k/k!,c^{k-1}/(k-1)!,\dots,c,1,0,\dots,0$;
\item
$O_{2,k}$ is the matrix of type $N\times N$ with all terms equal to 0
except for the $(k+1)$th row which is made of
$\gamma^N/N!,$ $\gamma^{N+1}/(N+1)!,\dots,\gamma^{2N-1}/(2N-1)!$.
\end{itemize}
The determinant of $\tilde{A}_1$ remains unchanged by interchanging
its $(k+1)$th and $N$th columns and its $(k+1)$th and $(k+1)$th rows.
This yields that
\begin{align*}
\det(\tilde{A}_1)
&
=\left|\begin{array}{@{\hspace{0.1em}}cccc;{.08em/.3em}ccc@{\hspace{0.2em}}}
\,1      & \cdots & 0       & 0                      & 0                         & \cdots & 0
\\[.5ex]
\,\vdots & \ddots & \vdots  & \vdots                 & \vdots                    &        & \vdots
\\[.5ex]
\,0      & \cdots & 1       & 0                      & 0                         & \cdots & 0
\\
\,0      & \cdots & 0       & \frac{\gamma^k}{k!}    & \frac{\gamma^N}{N!}       & \cdots & \frac{\gamma^{2N-1}}{(2N-1)!}
\\[-1ex]
\hdotsfor{7}
\\
\,0      & \cdots & 0       & \frac{c^k}{k!}         & \frac{c^N}{N!}             & \cdots & \frac{c^{2N-1}}{(2N-1)!}
\\[.5ex]
\,0      & \cdots & 0       & \frac{c^{k-1}}{(k-1)!} & \frac{c^{N-1}}{(N-1)!}     & \cdots & \frac{c^{2N-2}}{(2N-2)!}
\\[.5ex]
\,\vdots &        & \vdots  & \vdots                 & \vdots                     &        & \vdots
\\[.5ex]
\,0      & \cdots & 0       & 1                      & \frac{c^{N-k}}{(N-k)!}     & \cdots & \frac{c^{2N-k-1}}{(2N-k-1)!}
\\[.5ex]
\,0      & \cdots & 0       & 0                      & \frac{c^{N-k-1}}{(N-k-1)!} & \cdots & \frac{c^{2N-k-2}}{(2N-k-2)!}
\\[.5ex]
\,\vdots &        & \vdots  & \vdots                 & \vdots                     &        & \vdots
\\[.5ex]
\,0      & \cdots & 0       & 0                      & c                          & \cdots & \frac{c^N}{N!}
\end{array}\right|
\\[2ex]
&
=\begin{vmatrix}
\,\frac{\gamma^k}{k!}    & \frac{\gamma^N}{N!}       & \cdots & \frac{\gamma^{2N-1}}{(2N-1)!}
\\[.5ex]
\,\frac{c^k}{k!}         & \frac{c^N}{N!}             & \cdots & \frac{c^{2N-1}}{(2N-1)!}
\\[.5ex]
\,\frac{c^{k-1}}{(k-1)!} & \frac{c^{N-1}}{(N-1)!}     & \cdots & \frac{c^{2N-2}}{(2N-2)!}
\\[.5ex]
\,\vdots                 & \vdots                     &        & \vdots
\\[.5ex]
\,1                      & \frac{c^{N-k}}{(N-k)!}     & \cdots & \frac{c^{2N-k-1}}{(2N-k-1)!}
\\[.5ex]
\,0                      & \frac{c^{N-k-1}}{(N-k-1)!} & \cdots & \frac{c^{2N-k-2}}{(2N-k-2)!}
\\[.5ex]
\,\vdots                 & \vdots                     &        & \vdots
\\[.5ex]
\,0                      & c                          & \cdots & \frac{c^N}{N!}
\end{vmatrix}\!.
\end{align*}
By expanding this last determinant with respect to its first row,
it is not difficult to see that $\det(\tilde{A}_1)=O\left(\gamma^k c^{N^2}\right)$
(recall that $c$ and $\gamma$ have the same order of growth when $\lambda$ tends to 0).
Therefore, in terms of the variable $\lambda$,
\begin{equation}\label{asymptotics2}
\Delta_k^-(\lambda;x)\underset{\lambda\to 0^+}{=}O\big(\lambda^{k/(2N)+N/2}\big)
\end{equation}
and the same holds for $\Delta_k^+(\lambda;x)$. Finally, by~(\ref{asymptotics1})
and~(\ref{asymptotics2}), we derive that
\begin{equation}\label{asymptotics}
\frac{\Delta_k^-(\lambda;x)}{\Delta(\lambda)}
\underset{\lambda\to 0^+}{=}O\big(\lambda^{k/(2N)}\big).
\end{equation}


\end{document}